\documentclass[12pt]{article}%
\usepackage{amsmath,amssymb,amsfonts,amsthm}
\usepackage{setspace}
\usepackage{fullpage}
\usepackage{hyperref}
\usepackage{amscd}
\usepackage{graphpap}
\usepackage[pdftex]{color,graphicx}
\usepackage{color,graphicx}
\usepackage{empheq}
\usepackage{ragged2e}
\usepackage{comment}
\usepackage{array,epsfig}
\usepackage{amsxtra}
\usepackage{mathrsfs}
\usepackage{color}
\usepackage{makeidx}
\usepackage[justification=centering]{caption}
\usepackage{enumerate}
\usepackage{nccmath}
\usepackage{empheq}
\usepackage{leading} 
\usepackage{subfigure} 
\usepackage{graphicx}
\usepackage{caption,multicol,booktabs,array}
\usepackage[singlelinecheck=false]{caption}
\usepackage[labelfont=bf,labelsep=quad,justification=justified]{caption}
\usepackage{array}
\usepackage{float}
\usepackage{caption}
\usepackage{booktabs}
\usepackage{arydshln}
\usepackage{hyperref}

\newtheorem{theorem}{Theorem}[section]
\newtheorem{definition}{Definition}[section]

\newtheorem{proposition}[theorem]{Proposition}
\newtheorem{corollary}[theorem]{Corollary}

\newtheorem{example}{Example}

\usepackage{geometry}
 \geometry{
 a4paper,
 total={170mm,257mm},
 left=20mm,
 top=20mm,
 }
 
 \hypersetup{
  colorlinks=true,
  linkcolor=blue,
  citecolor=red,
  filecolor=blue,
  urlcolor=blue,
}

\title{\bf  Convergence and inference for mixed Poisson random sums}
\author{W. Barreto-Souza$^\star$\footnote{Email: \texttt{wagner.barretosouza@kaust.edu.sa}},\,\, G. Oliveira$^\star$\footnote{Email: \texttt{gabriela.oliveira.mat@gmail.com}}\,\, and R.W.C. Silva$^\star$\footnote{Email: \texttt{rogerwcs@est.ufmg.br}} \\\\
\small $^\star$\it Departamento de Estat\' \i stica, Universidade Federal de Minas Gerais, Belo Horizonte, Brazil\\
\small $^\ast$\it Statistics Program, King Abdullah University of Science and Technology, Thuwal, Saudi Arabia}
\date{}

\begin{document}

\maketitle

\begin{abstract}
In this paper we obtain the limit distribution for partial sums with a random number of terms following a class of mixed Poisson distributions. The resulting weak limit is a mixing between a normal distribution and an exponential family, which we call by normal exponential family (NEF) laws. A new stability concept is introduced and a relationship between $\alpha$-stable distributions and NEF laws is established. We propose estimation of the parameters of the NEF models through the method of moments and also by the maximum likelihood method, which is performed via an Expectation-Maximization algorithm. Monte Carlo simulation studies are addressed to check the performance of the proposed estimators and an empirical illustration on financial market is presented.
     \\
     
\noindent {\bf Keywords:}  EM-algorithm; Mixed Poisson distribution; Stability; Weak convergence.

\end{abstract}

\section{Introduction}
One of the most important and beautiful theorems in probability theory is the Central Limit Theorem, which lays down the convergence in distribution of the partial sum (properly normalized) of {\it i.i.d.} random variables with finite second moment to a normal distribution. This can be seen as a characterization of the normal distribution as the weak limit of such sums. A natural variant of this problem is placed when the number of terms in the sum is random. For instance, counting processes are of fundamental importance in the theory of probability and statistics. A comprehensive account for this topic is given in  \cite{gnekor1996}. One of the earliest counting models is the compound Poisson process $\{C_t\}_{t\geq 0}$ defined as 
\begin{equation}\label{cpoisson}C_t=\sum_{n=1}^{N_t}X_n,\quad t\geq 0,
\end{equation} 
where $\{N_t\}_{t\geq 0}$ is a Poisson process with rate $\lambda t$, $\lambda>0$, and $\{X_n\}_{n\in\mathbb{N}}$ is a sequence of ${\it i.i.d.}$ random variables independent of the Poisson process. Applications of the random summation (\ref{cpoisson}) include risk theory, biology, queuing theory and finance; for instance, see \cite{ekm}, \cite{p} and \cite{pb}. For fixed $t$, it can be shown that the random summation given in (\ref{cpoisson}), when properly normalized, converges weakly to the standard normal distribution as $\lambda \rightarrow \infty$. 

Another important quantity is the geometric random summation defined as
\begin{equation*}\label{geosum}S_p=\sum_{n=1}^{\nu_p}X_n,
\end{equation*} 
where $\nu_p$ is a geometric random variable with probability function $P(\nu_p=k)=(1-p)^{k-1}p,\,\,\,k=1,2,\dots,$ and  $\{X_n\}_{n\in\mathbb{\,\,N}}$ is a sequence of {\it i.i.d.} random variables independent of $\nu_p$, for $p\in(0,1)$. Geometric summation has a wide range of applications such as risk theory, modeling financial asset returns, insurance mathematics and others, as discussed in \cite{k}.

In \cite{r} it is shown that if $X_n$ is a positive random variable with finite mean, then $pS_p$ converges weakly to an exponential distribution as $p\rightarrow 0$. If the $X_n's$ are symmetric with $E(X_1)=0$ and finite second moment, then there exists $a_p$ such that $a_p S_p$ converges weakly to a Laplace distribution when $p\rightarrow 0$. If $X_n$ has an asymmetric distribution, it is possible to show that the geometric summation properly normalized converges in distribution to the asymmetric Laplace distribution. These last two results and their proofs can be found in \cite{kkp}. 

The purpose of the present paper is to study the random summation with mixed Poisson number of terms. For a review about mixed Poisson distributions see \cite{kx}. In \cite{gavkor2006} it is shown that the mixed Poisson (MP) random sum converges weakly to a scale mixture of normal distributions (see \cite{west1987} for a definition of such mixture) by assuming that the sequence $\{X_n\}_{n\geq1}$ is {\it i.i.d.} (with $E(X_1)=0$ and $\mbox{Var}(X_1)=1$) and that there exists $\delta>0$ such that $E(|X_1|^{2+\delta})<\infty$. This last assumption is necessary since the main interest in that paper is to find a Berry-Eessen type bound for the weak convergence. The study of accuracy for the convergence of MP random sums is also considered in \cite{korshe2012}, \cite{kordor2017} and \cite{she2018}. Limit theorems for random summations with a negative binomial or generalized negative binomial (which are MP distributions) number of terms, with applications to real practical situations, are addressed in \cite{benkor2005}, \cite{sc} and \cite{korzei2019}.

Our chief goal in this paper is to explore mixed Poisson random summations under different assumptions compared to those in previous works in the literature, since our aims here are also different. We  assume that the number of terms follows the MP class of distributions proposed in \cite{barsou2015} and \cite{bs}. This class contains the negative binomial and Poisson inverse-Gaussian distributions as particular cases. Further, we assume that the sequence $\{X_n\}_{n\geq1}$ is {\it i.i.d.} with non-null mean and finite second moment. We do not require more than finite second moment, in contrast to the work in \cite{gavkor2006}. Under these conditions, we show that the weak limit of a MP random sum belongs to a class of normal variance-mean mixtures (see \cite{bn} for a definition of this kind of distribution) driven by a latent exponential family. We call this new class of distributions by normal-exponential family (in short NEF). In particular, this class contains the normal inverse-Gaussian (NIG) distribution introduced in \cite{bn} as a special case. Therefore, this provides a new characterization for the NIG law.

Another contribution of this paper is the introduction of the new mixed Poisson stability concept, which includes the geometric stability (see \cite{kr,mr}) as a particular case. We also provide a theorem establishing a relationship between our proposed MP stability and the $\alpha$-stable distributions.

The statistical contribution of our paper is the inferential study on the limiting class of distributions, which is of practical interest. We propose estimation of the parameters of the NEF models through the method of moments and also by the maximum likelihood method, which is performed via an Expectation-Maximization (EM) algorithm (see \cite{demetal1977}).

The paper is organized in the following manner. In Section \ref{theo_results} we show that the mixed Poisson random sums converges weakly, under some mild conditions, to a normal variance-mean mixture. Further, we define a new concept called mixed Poisson stability, which generalizes the well-known geometric stability. Properties of the limiting class of NEF distributions are explored in Section \ref{lim_distributions}. Inferential aspects of the NEF models are addressed in Section \ref{inference}. In Section \ref{simulation} we present Monte Carlo simulations to check the finite-sample behavior of the proposed estimators. A real data application is presented in Section \ref{application}.

\section{Weak convergence and stability}\label{theo_results}

In this section we provide the main probabilistic results of the paper. To do this, we first present some preliminaries about the mixed Poisson distributions considered here. Then, we establish the weak convergence for mixed Poisson summations and based on this we introduce a new stability concept.

\subsection{Weak limit of mixed Poisson random sums}

A mixed Poisson distribution is a generalization of the Poisson distribution which is constructed as follows. 

\begin{definition} Let $W_\phi$ be a strictly positive random variable with distribution function $G_{\phi}(\cdot)$, where $\phi$ denotes a parameter associated to $G$. We will later assume $W_\phi$ belongs to a particular exponential family of distributions. Let $N|W_{\phi}=w \sim  $ Poisson $(\lambda w)$, for  $\lambda>0$. In this case we say that $N$ follows a mixed Poisson distribution. Its probability function assumes the form
\begin{equation*}\label{probabilidade poisson misturada}
P(N = n) = \int_0^{\infty} \dfrac{e^{-\lambda w}(\lambda w)^n}{n!}dG_{\phi}(w),\quad \mbox{for}\quad n\in\mathbb N\equiv\{0,1,2,\ldots\}.
\end{equation*}
\end{definition}

For instance, if $W_\phi$ is assumed to be gamma or inverse-Gaussian distributed, then $N$ is negative binomial or Poisson inverse-Gaussian distributed, respectively. 

We consider the class of mixed Poisson distributions introduced in \cite{bs}, which is defined by assuming that $W_\phi$ is a continuous positive random variable belonging to the exponential family of distributions. This family was also considered in a survival analysis context in \cite{barsou2015}. We assume that there exist a $\sigma$-finite measure $\nu$ such that the probability density function (pdf) of $W_\phi$ with respect to $\nu$ is 
\begin{equation}\label{EF_density}
f_{W_\phi}(w) = \exp\{\phi[w\xi_0 - b(\xi_0)] + c(w; \phi)\}, \quad w > 0, \quad \phi > 0,
\end{equation}
where $b(\cdot)$ is continuous, three times differentiable and $\xi_0$ is such that  $b'  (\xi_0) = 1$ and $c(\cdot,\cdot): \mathbb{R}^{+} \times \mathbb{R}^{+} \to \mathbb{R}$. In this case, $E(W) = b'(\xi_0) = 1$ and $Var(W) = \phi^{-1}b''(\xi_0)$. For more details about this class of distributions we refer the reader to \cite{bs}.

From now on we adopt the following notation: for any random variable $X$ we write $\psi_X(t)$ for its characteristic function ($ch.f.$). We write $W_{\phi} \sim\mbox{EF}(\phi)$ for $W_{\phi}$ belonging to the exponential family and $N_{\lambda}\sim\mbox{MP}(\lambda, W_{\phi})$, making clear the mixture distribution involved; we also denote $N_{\lambda}\sim MP(\lambda, \phi)$ when the latent variable is not important for the discussion in question. Let $S_{\lambda}\equiv X_1+X_2+\cdots+X_{N_{\lambda}}$, where $N_{\lambda}\sim MP(\lambda,W_{\phi})$ as before and $S_{\lambda}\equiv0$ when $N_{\lambda}=0$. Throughout the text $\{X_n\}_{n\in\mathbb{N}}$ will always be a sequence of $i.i.d.$ random variables independent of $N_{\lambda}$. 

Before we can state our main result we need an extra observation. In \cite{s} the author provides a characterization of the exponential familty with a single natural parameter $\theta$ in terms of its characteristic function. In that paper, $T_{\theta}$ belongs to this family if there exists a $\sigma$-finite measure $\nu$ such that the pdf of $T_{\theta}$ with respect to $\nu$ is of the form
\begin{equation}\label{EF_density2}
f_{T_{\theta}}(y) = \exp[\theta y+Q(\theta)+R(y)],\quad y\in\mathbb S,
\end{equation}
where $\mathbb S$ is the support of the distribution. The following theorem appears in \cite{s} and plays an important role in this paper.

\begin{theorem}[Sampson, A.R.]\label{sampson}
	Let $\{T_{\theta},\,\theta\in (a,b)\}$ be a family of random variables such that (\ref{EF_density2}) holds and  $E(T_{\theta})\equiv g(\theta)$. Then for $\theta\in (a,b)$ the characteristic function of $T_{\theta}$ exists and is given by 
	$$\psi_{T_{\theta}}(t)=\exp[G(\theta +it)-G(\theta)],\,\,\,\,\,t\in (a-\theta,b-\theta),$$ where $G(z)$ is the analytic extension to the complex plane of $\int g(w)dw$. 
\end{theorem}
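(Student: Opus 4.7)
The strategy is to compute the moment generating function (MGF) of $T_\theta$ directly from the exponential-family representation (\ref{EF_density2}), identify its cumulant generator via the mean condition, and then pass from the MGF to the characteristic function by analytic continuation.

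First, I would exploit the normalisation of (\ref{EF_density2}): for every $\theta\in(a,b)$, $e^{-Q(\theta)}=\int_{\mathbb{S}}e^{\theta y+R(y)}\,d\nu(y)$. For real $s$ with $\theta+s\in(a,b)$, computing $M_{T_\theta}(s)=E[e^{sT_\theta}]$ directly and recognising the integrand as the unnormalised density at natural parameter $\theta+s$ yields $M_{T_\theta}(s)=\exp[Q(\theta)-Q(\theta+s)]$. Differentiating at $s=0$ gives $E(T_\theta)=-Q'(\theta)$; combined with the hypothesis $E(T_\theta)=g(\theta)$ this gives $Q'(\theta)=-g(\theta)$, so $Q(\theta)=-G(\theta)+C$ for some constant $C$. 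The constant cancels in the difference $Q(\theta)-Q(\theta+s)$, producing $M_{T_\theta}(s)=\exp[G(\theta+s)-G(\theta)]$ on the real interval.

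The final and most delicate step is extending $s$ from real to purely imaginary values. I would show that the map $z\mapsto\int_{\mathbb{S}}e^{(\theta+z)y+R(y)}\,d\nu(y)$ is holomorphic on the vertical strip $\{z\in\mathbb{C}:\mathrm{Re}(z)\in(a-\theta,b-\theta)\}$, via Morera's theorem combined with dominated convergence; the majorant is built from the integrable envelopes $e^{\theta'y+R(y)}$ for real $\theta'$ slightly inside $(a,b)$, since $|e^{(\theta+z)y}|=e^{(\theta+\mathrm{Re}(z))y}$. On the real slice of the strip this function coincides with $e^{-Q(\theta+z)}=\exp[G(\theta+z)-C]$, which is itself holomorphic on the strip via the analytic extension of $G$ posited in the statement. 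By the identity principle the two holomorphic functions agree throughout the strip; evaluating at $z=it$ and multiplying by $e^{Q(\theta)}$ produces the asserted formula $\psi_{T_\theta}(t)=\exp[G(\theta+it)-G(\theta)]$.

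The main obstacle is precisely this analytic-continuation step: one must verify with some care that the Laplace transform of the reference measure $e^{R(y)}\,d\nu(y)$ defines a genuinely holomorphic function on an open complex neighbourhood of the real interval of natural parameters, and then match it against the extension of the cumulant generator $-Q=G-C$. Everything else is bookkeeping on the exponential-family density, and the interchange of differentiation and integration used to compute $E(T_\theta)=-Q'(\theta)$ is justified by the standard regularity of natural exponential families on the interior of their natural parameter space.
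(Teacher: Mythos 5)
The paper does not prove this result---it is quoted verbatim from Sampson (1975) and used as a black box---so there is no in-paper argument to compare against. Your reconstruction is correct and is essentially Sampson's original argument: the normalisation identity gives $M_{T_\theta}(s)=\exp[Q(\theta)-Q(\theta+s)]$ on the real interval, the mean condition identifies $Q'=-g$ so that $Q=-G+C$ with the constant cancelling, and the passage to $s=it$ is handled by holomorphy of the Laplace transform of $e^{R(y)}\,d\nu(y)$ on the open strip together with the identity principle. The one point worth making explicit is that since $\mathrm{Re}(\theta+it)=\theta\in(a,b)$, your argument actually yields the characteristic-function formula for \emph{all} real $t$, which is stronger than (and consistent with) the stated range $t\in(a-\theta,b-\theta)$ inherited from the moment-generating-function version of the theorem.
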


We are ready to state the main result of this section.

\begin{theorem}\label{theo_conv} Let $N_{\lambda}\sim MP(\lambda,W_{\phi})$, $\{X_n\}_{n\in \mathbb{N}}$ a sequence of i.i.d. random variables with $E(X_1)=\mu\in\mathbb{R}$ and $\mbox{Var}(X_1)=\sigma^2>0$. There exist numbers $a_{\lambda} = \frac{1}{\sqrt{\lambda}}$ and $d_{\lambda} = \mu\left(\frac{1}{\sqrt{\lambda}} - 1\right)$ such that 
\begin{equation*}
 \widetilde{S}_{\lambda} = a_{\lambda}\sum\limits_{i = 1}^{N_{\lambda}}(X_i + d_{\lambda}) \xrightarrow[\lambda\rightarrow\infty]{d} Y,
\end{equation*}
where $Y$ is a random variable with $ch.f.$
\begin{eqnarray}\label{chfY}
 \psi_Y(t) = \exp\left[-\phi\left\{b(\xi_0) - b\left[\xi_0 + \phi^{-1}\left(it\mu - \frac{t^2\sigma^2}{2} \right)\right]\right\} \right].
\end{eqnarray}
\end{theorem}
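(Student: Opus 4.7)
The natural route is via characteristic functions and L\'evy's continuity theorem, exploiting the conditional Poisson structure together with Theorem \ref{sampson}.

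First I would compress the problem to a statement about the Laplace/Fourier exponent. Conditioning on $N_\lambda$ and using the independence of $\{X_n\}$ from $N_\lambda$,
\begin{equation*}
\psi_{\widetilde S_\lambda}(t) \;=\; E\!\left[\eta_\lambda(t)^{N_\lambda}\right],\qquad \eta_\lambda(t):=e^{i t a_\lambda d_\lambda}\,\psi_{X_1}(a_\lambda t).
\end{equation*}
Since $N_\lambda\mid W_\phi\sim\mathrm{Poisson}(\lambda W_\phi)$, the probability generating function of $N_\lambda$ is $z\mapsto E[e^{\lambda W_\phi(z-1)}]$; combined with Theorem \ref{sampson} applied to $W_\phi\sim\mathrm{EF}(\phi)$, which yields $\psi_{W_\phi}(s)=\exp\{\phi[b(\xi_0+is/\phi)-b(\xi_0)]\}$ and, by analytic extension (here the role of Sampson's theorem is crucial, since the argument $\eta_\lambda(t)$ is complex), the identity
\begin{equation*}
\psi_{\widetilde S_\lambda}(t)\;=\;\exp\!\Big\{\phi\Big[b\!\left(\xi_0+\tfrac{\lambda(\eta_\lambda(t)-1)}{\phi}\right)-b(\xi_0)\Big]\Big\}.
\end{equation*}

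Next I would carry out a second-order Taylor expansion of $\eta_\lambda(t)$. Using $\psi_{X_1}(u)=1+i\mu u-\tfrac{1}{2}(\sigma^2+\mu^2)u^2+o(u^2)$ and $e^{i t a_\lambda d_\lambda}=1+it a_\lambda d_\lambda-\tfrac{1}{2}(t a_\lambda d_\lambda)^2+o(a_\lambda^2)$, with $a_\lambda d_\lambda=\mu/\lambda-\mu/\sqrt{\lambda}$, the $O(\lambda^{-1/2})$ contributions from the multiplicative drift and from $\psi_{X_1}$ cancel exactly (this is the whole purpose of the recentering $d_\lambda$), and a careful bookkeeping of the $O(\lambda^{-1})$ terms gives
\begin{equation*}
\lambda\big(\eta_\lambda(t)-1\big)\;=\;it\mu-\tfrac{1}{2}\sigma^2 t^2+o(1),\qquad \lambda\to\infty.
\end{equation*}

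Finally, since $b$ is continuous by hypothesis, passing to the limit inside the exponential produces
\begin{equation*}
\psi_{\widetilde S_\lambda}(t)\;\longrightarrow\;\exp\!\left\{-\phi\!\left[b(\xi_0)-b\!\left(\xi_0+\phi^{-1}\!\left(it\mu-\tfrac{t^2\sigma^2}{2}\right)\right)\right]\right\},
\end{equation*}
which is precisely (\ref{chfY}). Because the limiting function is continuous at $t=0$ (it equals $1$ there and $b$ is differentiable in a neighbourhood of $\xi_0$), L\'evy's continuity theorem delivers the claimed weak convergence.

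The step I expect to be most delicate is the justification that Sampson's closed form for $\psi_{W_\phi}$ may be evaluated at the complex argument $\lambda(\eta_\lambda(t)-1)/\phi$, i.e., that the probability generating function identity extends analytically to a neighbourhood containing $\eta_\lambda(t)$ for all $\lambda$ sufficiently large; this is ensured by Theorem \ref{sampson} together with the fact that $\eta_\lambda(t)\to 1$. The remainder of the argument reduces to a bookkeeping Taylor expansion whose only genuine content is the cancellation of the $\lambda^{-1/2}$ drift, the very reason $d_\lambda$ was defined as $\mu(1/\sqrt{\lambda}-1)$.
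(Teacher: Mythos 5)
Your proposal is correct and follows essentially the same route as the paper: condition on $N_\lambda$, express $\psi_{\widetilde S_\lambda}$ through the probability generating function of $N_\lambda$ combined with Sampson's formula for $\psi_{W_\phi}$, and pass to the limit using the continuity of $b$ and L\'evy's continuity theorem. The only difference is that you compute $\lim_{\lambda\to\infty}\lambda(\eta_\lambda(t)-1)=it\mu-\tfrac{1}{2}\sigma^2t^2$ by an explicit second-order Taylor expansion (with the cancellation of the $\lambda^{-1/2}$ drift terms made visible), whereas the paper invokes L'H\^{o}pital's rule twice; the two computations are equivalent.
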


\begin{proof}

First note that (\ref{EF_density}) can be written in the form of (\ref{EF_density2}) by taking $\theta=\phi\xi_0$. It follows from Theorem \ref{sampson} that the $ch.f.$ of $W_\phi$ is given by
\begin{equation}\label{FEchf}
\psi_{W_\phi}(t) = \exp\left\{-\phi\left[b(\xi_0) - b\left(\xi_0 + \frac{it}{\phi}\right)\right]\right\},\,\,\,t\in\mathbb{R}.
\end{equation}

From (\ref{FEchf}), we immediately obtain that
\begin{eqnarray}\label{chf_nw}
\psi_{N_{\lambda}}(t) = \psi_{W_{\phi}}[\lambda(e^{it} - 1)]= \exp\left\{-\phi\left[b(\xi_0) - b\left(\xi_0 + \frac{i}{\phi}\lambda\left(e^{it} - 1\right)\right)\right]\right\},\quad t\in\mathbb R.
\end{eqnarray}

The Tower Property of conditional expectations gives
$$\psi_{\widetilde{S}_{\lambda}}(t) = E\left[E\left(\exp\left\{it\left(a_{\lambda}\sum\limits_{i = 1}^{N_{\lambda}}(X_i + d_{\lambda})\right)\right\} \bigg| N_{\lambda}\right)\right].$$ 

Let $G_{N_{\lambda}}(\cdot)$ denote the probability generating function of $N_{\lambda}$. Then, we use (\ref{chf_nw}) to obtain 
\begin{eqnarray*}
    \psi_{\widetilde{S}_{\lambda}}(t) &=& G_{N_{\lambda}}\left(\psi_{X_1 - \mu}\left( \frac{t}{\sqrt{\lambda}}\right)e^{i\frac{\mu}{\lambda}t}\right)= \psi_{N_{\lambda}}\left(\frac{1}{i}\log\left\{\psi_{X_1 - \mu}\left( \frac{t}{\sqrt{\lambda}}\right)e^{i\frac{\mu}{\lambda}t}\right\}\right)\\
    &=& \exp\left[- \phi \left\{b(\xi_0) - b\left[\xi_0 + \frac{i\lambda}{\phi}\left(\psi_{X_1 - \mu}\left(\frac{t}{\sqrt{\lambda}} \right)e^{i\frac{\mu}{\lambda}t}  - 1\right)  \right] \right\} \right].
\end{eqnarray*}

Taking $\lambda\rightarrow\infty$ and applying L'H\^ opital's rule twice (in the second-order derivative we are using the assumption of finite variance of of the sequence $\{X_n\}$) we obtain
\begin{equation*}\label{sem nome}
   \lim\limits_{\lambda \to \infty} \psi_{\widetilde{S}_{\lambda}}(t) = \exp\left[-\phi\left\{b(\xi_0) - b\left[\xi_0 + \phi^{-1}\left(it\mu - \frac{t^2\sigma^2}{2}\right)\right]\right\} \right] \equiv \psi_Y(t), \; \forall\; t \in \mathbb{R}. 
\end{equation*}
\end{proof}

A special case of Theorem \ref{theo_conv} is obtained when the sequence of random variables has null-mean. 
\begin{corollary} Let $N_{\lambda}\sim MP(\lambda,W_{\phi})$, $\{X_n\}_{n\in \mathbb{N}}$ a sequence of i.i.d. random variables with $E(X_1)=0$ and $Var(X_1)=1$. Then, 
    $\lim\limits_{\lambda \to \infty} \psi_{\widetilde{S}_{\lambda}}(t) = \psi_{W_\phi}\left(-\frac{t^2}{2}\right)$, for $t\in\mathbb R$.
\end{corollary}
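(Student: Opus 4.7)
The plan is to specialize Theorem~\ref{theo_conv} to the case $\mu=0$, $\sigma^2=1$, since this corollary is just a direct reading-off of that result with the given moment assumptions. Under these assumptions, the normalizing shift $d_\lambda = \mu(1/\sqrt{\lambda}-1)$ collapses to $0$, so $\widetilde{S}_\lambda$ reduces to the cleaner form $\lambda^{-1/2}\sum_{i=1}^{N_\lambda} X_i$, and Theorem~\ref{theo_conv} guarantees that $\psi_{\widetilde S_\lambda}(t)\to\psi_Y(t)$ with $\psi_Y$ as in (\ref{chfY}).

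The remaining task is a book-keeping step. First I would substitute $\mu=0$ and $\sigma^2=1$ into (\ref{chfY}); the linear-in-$t$ term $it\mu$ vanishes and the bracket $\phi^{-1}(it\mu - t^2\sigma^2/2)$ becomes simply $-t^2/(2\phi)$, yielding
\begin{equation*}
\psi_Y(t)=\exp\!\left[-\phi\left\{b(\xi_0) - b\!\left(\xi_0 - \frac{t^2}{2\phi}\right)\right\}\right].
\end{equation*}
Next I would match this expression to the closed form (\ref{FEchf}) for $\psi_{W_\phi}$ obtained from Sampson's theorem. Reading (\ref{FEchf}) with its argument replaced formally by $-t^2/2$ (i.e.\ using the analytic extension of the characteristic function provided by Theorem~\ref{sampson}, so that the factor $i/\phi$ inside $b$ combines with the imaginary input in the usual way) gives exactly the expression above, allowing us to identify the limiting characteristic function with $\psi_{W_\phi}(-t^2/2)$.

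There is essentially no obstacle here: the only subtle point is the interpretation of $\psi_{W_\phi}$ at the non-characteristic-function argument $-t^2/2$, which should be understood through the explicit analytic formula in (\ref{FEchf}) rather than as the Fourier transform at a real point. Once that identification is made explicit, the corollary follows at once from Theorem~\ref{theo_conv} and requires no further probabilistic argument.
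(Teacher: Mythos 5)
Your proposal is correct and matches what the paper intends: the corollary is stated as an immediate specialization of Theorem~\ref{theo_conv} (the paper gives no separate proof), and setting $\mu=0$, $\sigma^2=1$ in (\ref{chfY}) — noting $d_\lambda=0$ — gives $\exp\left[-\phi\left\{b(\xi_0)-b\left(\xi_0-\frac{t^2}{2\phi}\right)\right\}\right]$, which is $E\left(e^{-t^2W_\phi/2}\right)$. You are also right to flag that $\psi_{W_\phi}\left(-\frac{t^2}{2}\right)$ must be read as this analytic extension (the Laplace-transform value, exactly the convention the paper itself uses in the proof of Proposition~\ref{mixture}) rather than as the characteristic function at the real point $-t^2/2$, since a literal substitution into (\ref{FEchf}) would leave a spurious factor of $i$ inside $b$.
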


Before we move on to more theoretical results, let us present a few examples.

\begin{example}\label{example_1}
If $N_{\lambda}$ has negative binomial distribution with parameters  $\lambda$ and $\phi = 1$, then  $b(\theta) = -\log{(-\theta)}, \xi_0 = -1$ and $c(w; 1) = 0$. From Theorem \ref{theo_conv}, it follows that
\begin{equation*}
 \psi_{Y}(t) = \frac{1}{1 - \frac{t^2\sigma^2}{2} - t\mu},\quad t\in\mathbb R.
\end{equation*}

This is the $ch.f.$ of an asymmetric Laplace distribution with parameters $\mu \in \mathbb{R}$ and $\sigma^2 \ge 0$, denoted here by $\mbox{AL}(\mu,\sigma^2)$. In other words, $\widetilde{S}_{\lambda} \overset{d}\to AL(\mu, \sigma^2)$ as $\lambda\rightarrow\infty$.
\end{example}

In Example \ref{example_1}, we have that the density function of $Y$ can be expressed in terms of the parameterization given in \cite{kkp}, $i.e.,$
\begin{align*}
    f_Y(y)= \frac{\sqrt{2}}{\sigma}\frac{\kappa}{1 + \kappa^2}\left\{\begin{array}{rc}
\exp{\left(- \frac{\sqrt{2}\kappa}{\sigma}|y|\right)},&\mbox{for}\quad y \ge 0,\\
\exp{\left(- \frac{\sqrt{2}}{\sigma \kappa}|y|\right)}, &\mbox{for}\quad y < 0,\\
\end{array}\right.
\end{align*}
where $\kappa = \frac{\sqrt{2\sigma^2 + \mu^2} - \mu}{\sqrt{2}\sigma}$ is the skewness parameter. 

\begin{example} We say a random variable $Z$ has normal inverse-Gaussian distribution with parameters $\alpha$, $\beta$, $\gamma$ and $\delta$, and write $X\sim NIG (\alpha, \beta, \gamma, \delta)$, if its $ch.f.$ is given by
$$\psi_Z(t)=exp\left\{\delta[\sqrt{\alpha^2-\beta^2}-\sqrt{\alpha^2-(\beta+it)^2}]+\gamma it\right\}, \,\,t\in\mathbb{R}.$$

See \cite{bn} for more details on this distribution. Now, if $N_{\lambda}$ has Poisson inverse-Gaussian distribution with parameters $\lambda$ and $\phi$ ($N_{\lambda} \sim PIG(\lambda, \phi)$), then $b(\theta) = -\sqrt{(-2\theta)}$ and $\xi_0 = -\frac{1}{2}$. Using again Theorem \ref{theo_conv}, we get
$$ \psi_Y(t) = \exp\left\{\phi\left(1 - \sqrt{1 - \phi^{-1}\left(t^2\sigma^2 + 2t\mu\right)}\right)\right\}.$$

This is the $ch.f.$ of a random variable with normal inverse-Gaussian distribution with parameters $\alpha = \sqrt{\frac{\phi}{\sigma^2} + \frac{\mu^2}{\sigma^4}}$, $\beta = \frac{\mu}{\sigma^2}$, $\gamma = 0$ and $\delta = \sqrt{\phi}\sigma$. 
Therefore, $\widetilde{S}_{\lambda} \overset{d}\longrightarrow NIG\left(\sqrt{\frac{\phi}{\sigma^2} + \frac{\mu^2}{\sigma^4}}, \frac{\mu}{\sigma^2}, 0, \sqrt{\phi}\sigma \right)$ as $\lambda\rightarrow\infty$.

\end{example}

The above examples provide characterizations for the Laplace and NIG distributions as weak limits of properly normalized mixed Poisson random sums.

\subsection{Mixed Poisson-stability}

In this section we introduce the notion of a stable mixed Poisson distribution. Our aim is to characterize such a distribution in terms of its $ch.f.$.  
We start with the following definition.

\begin{definition}\label{mps_def} A random variable $Y$ is said to be mixed Poisson stable (MP-stable) with respect to the summation scheme, if there exist a sequence of $i.i.d.$ random variables $\{X_n\}_{n=1}^\infty$, a mixed Poisson random variable $N_{\lambda}$ independent of all $X_i$, and constants $a_{\lambda}>0$, $d_{\lambda}\in\mathbb{R}$  such that 
\begin{equation}\label{pm-estavel}
a_{\lambda}\sum\limits_{i = 1}^{N_{\lambda}}(X_i + d_{\lambda}) \overset{d}{\to} Y, 
\end{equation}
when $\lambda\rightarrow \infty$.
If $d_{\lambda} = 0$, we say $Y$ is strictly mixed Poisson stable.
\end{definition}

One of the most important objects in the theory of stable laws is the description of domains of attraction of stable laws. The definition of a domain of attraction is as follows.

\begin{definition} Let $\{X_i\}_{i\in \mathbb{N}}$ be a sequence of $i.i.d.$ random variables with distribution function $F$ and let $\{S_n,\,n\geq 1\}$ be the partial sums. We say that $F$ belongs to the domain of attraction of a (non-degenerate) distribution $G$ if there exist normalizing sequences $\{a_n\}_{n\in\mathbb{N}}$ ($a_n>0$) and $\{d_n\}_{n\in\mathbb{N}}$ such that
\begin{equation*}
\frac{S_n-d_n}{a_n}\xrightarrow[n\rightarrow\infty]{d}G.
\end{equation*} 
We denote $F\in\mathcal{D}(G)$.
\end{definition}

It turns out that $G$ possesses a domain of attraction if, and only if, $G$ is a stable distribution (see Theorem 3.1, Chapter 9 in \cite{g}).
The following theorem gives a characterization of MP-stable distributions in terms of its $ch.f.$.  

\begin{theorem}\label{stable_th}
Assume the sequence $\{X_n\}_{n=1}^\infty$ is according to Definition \ref{mps_def} and that its distribution function $F$ satisfies $F\in\mathcal{D}(G)$ for some $\alpha$-stable distribution $G$. Then, $Y$ is PM-stable if and only if its $ch.f.$ $\psi_Y$ is of the form
\begin{eqnarray}\label{st_ch}
\psi_Y(t) = \exp\left\{-\phi\left[b(\xi_0) - b\left(\xi_0 + \frac{1}{\phi}\log{\Psi(t)}\right)\right]\right\},
\end{eqnarray}
where $\Psi(t)$ is the $ch.f.$ of some $\alpha$-stable distribution.
\end{theorem}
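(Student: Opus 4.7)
The plan is to derive a closed form for $\psi_{\widetilde{S}_\lambda}(t)$ and translate the theorem into a statement about convergence of characteristic exponents. Conditioning on $N_\lambda$ gives $\psi_{\widetilde{S}_\lambda}(t) = G_{N_\lambda}(\psi_{X_1}(a_\lambda t)e^{i a_\lambda t d_\lambda})$; since $N_\lambda\mid W_\phi\sim\mathrm{Poisson}(\lambda W_\phi)$, the pgf $G_{N_\lambda}(z)$ equals the mgf of $W_\phi$ at $\lambda(z-1)$, and substituting (\ref{FEchf}) yields
$$\psi_{\widetilde{S}_\lambda}(t)=\exp\left\{-\phi\left[b(\xi_0)-b\left(\xi_0+\tfrac{\lambda}{\phi}\left[\psi_{X_1}(a_\lambda t)e^{i a_\lambda t d_\lambda}-1\right]\right)\right]\right\}.$$
Because $b$ is continuous with $b'(\xi_0)=1\neq 0$, it is locally invertible near $\xi_0$, so the convergence $\psi_{\widetilde{S}_\lambda}(t)\to\psi_Y(t)$ is equivalent to $\lambda[\psi_{X_1}(a_\lambda t)e^{i a_\lambda t d_\lambda}-1]\to L(t)$ for some function $L$, with $\psi_Y(t)=\exp\{-\phi[b(\xi_0)-b(\xi_0+L(t)/\phi)]\}$. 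The theorem therefore reduces to identifying which $L$ can arise, and showing these are precisely the logarithms of $\alpha$-stable characteristic functions.

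For sufficiency ($\Leftarrow$), given any $\alpha$-stable $\Psi$, the hypothesis $F\in\mathcal D(G)$ together with the convergence of types theorem ensures $\Psi$ has the same stable index as $G$, so classical domain-of-attraction theory (Theorem 3.1, Ch.\ 9 in \cite{g}) supplies constants $A_n>0$ and $B_n$ such that $(S_n-B_n)/A_n$ converges in distribution to the law with ch.f.\ $\Psi$ (after a linear change of normalization within the stable type if needed). Expanding $\log(1+\cdot)$, this is equivalent to $n[\psi_{X_1}(t/A_n)e^{-iB_n t/(nA_n)}-1]\to\log\Psi(t)$. Setting $a_\lambda=1/A_{\lfloor\lambda\rfloor}$ and $d_\lambda=-B_{\lfloor\lambda\rfloor}/\lfloor\lambda\rfloor$ and substituting back into the displayed identity delivers $\widetilde{S}_\lambda\xrightarrow{d}Y$.

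For necessity ($\Rightarrow$), assume $\widetilde{S}_\lambda\xrightarrow{d}Y$. The identity recovers $L(t)=\lim_{\lambda\to\infty}\lambda[\psi_{X_1}(a_\lambda t)e^{i a_\lambda t d_\lambda}-1]$ by inverting $b$, and restricting to integer $\lambda=n$ and setting $A_n=1/a_n$, $B_n=-nd_n$, this convergence is precisely $(S_n-B_n)/A_n\xrightarrow{d}\Psi:=e^L$. Since $F\in\mathcal D(G)$ with $G$ $\alpha$-stable, the convergence of types theorem forces any such non-degenerate weak limit to be of the same stable type as $G$, so $\Psi$ must be the ch.f.\ of an $\alpha$-stable law; a degenerate $\Psi$, corresponding to $L(t)=ict$, is treated as a trivial stable case.

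The main obstacle is bridging the continuous-$\lambda$ MP-stability formulation with the discrete-$n$ classical stable-limit theory. Concretely one must: (i) verify $a_\lambda\to 0$ (forced by finiteness of $L(t)$ for all $t$, which requires $\psi_{X_1}(a_\lambda t)\to 1$) so that the Taylor approximation $\log(1+z)\approx z$ is valid uniformly on compacts in $t$; (ii) interpolate the integer-indexed normalizing sequences to a continuous family without disturbing the limit; and (iii) invoke the convergence of types theorem to rule out non-stable infinitely divisible candidates for $\Psi$ and pin down the index $\alpha$.
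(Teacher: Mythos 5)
Your proposal follows essentially the same route as the paper's proof: write $\psi_{\widetilde S_\lambda}$ in closed form via the exponential-family ch.f., use invertibility of $b$ to reduce the MP-stability statement to the convergence $\lambda(\varphi_\lambda(t)-1)\to\log\Psi(t)$, pass between this and $(\varphi_n(t))^n\to\Psi(t)$ (the paper cites Feller, Ch.\ XVII, Thm.\ 1 for exactly this equivalence), and invoke the domain-of-attraction hypothesis to identify $\Psi$ as $\alpha$-stable. The technical points you flag --- $a_\lambda\to 0$, interpolating the integer-indexed normalizers to continuous $\lambda$, and the convergence-of-types step --- are precisely the ones the paper also passes over lightly, so your argument matches the published one in both substance and level of rigor.
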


\begin{proof}
By Lévy's Continuity Theorem, the convergence in Expression (\ref{pm-estavel}) holds if and only if
\begin{eqnarray}\label{convergencia}
\exp\left\{-\phi\left[b(\xi_0) - b\left(\xi_0 + \frac{\lambda}{\phi}\left(\varphi_{\lambda}(t) - 1\right]\right)\right)\right\} \xrightarrow[\lambda\rightarrow \infty]{} \psi_Y(t),
\end{eqnarray}
where $\varphi_{\lambda}(t) = \Psi_{X_1}(a_{\lambda}t)\exp(ita_{\lambda}d_{\lambda})$. 
Since $b(\cdot)$ is invertible (which follows by the continuity assumption), $b'(x)>0$ (so the function is monotone increasing) and $supp(EF(\phi))\subset\mathbb{R}^+$), it follows that (\ref{convergencia}) is equivalent to
\begin{equation*}
\lambda(\varphi_{\lambda}(t)-1) \xrightarrow[\lambda\rightarrow\infty]{}\phi\left\{b^{-1}\left(b(\xi_0) + \frac{1}{\phi}\log{\psi_Y(t)}\right) - \xi_0\right\},\,\,\,\forall t\in\mathbb{R}.
\end{equation*}

We can take the limit above with $\lambda\equiv \lambda_n=n\in\mathbb{N}$ instead of $\lambda\in\mathbb R$; $\{\lambda_n\}$ can be seen as a subsequence. In this case, letting $a_{\lambda}=a_n$ and $d_{\lambda}=d_n$, it follows that 
\begin{equation*}
n(\varphi_{n}(t)-1) \xrightarrow[n\rightarrow\infty]{}\phi\left\{b^{-1}\left(b(\xi_0) + \frac{1}{\phi}\log{\psi_Y(t)}\right) - \xi_0\right\},\,\,\,\forall t\in\mathbb{R}.
\end{equation*}

From Theorem 1 in Chapter XVII in \cite{f}, the above limit implies that
\begin{equation}\label{feller}
(\varphi_{n}(t))^n \xrightarrow[n\rightarrow\infty]{}\Psi(t),\,\,\,\forall t\in\mathbb{R},
\end{equation}
where
\begin{equation*}
\Psi(t)=\exp\left\{\phi\left\{b^{-1}\left[b(\xi_0) + \frac{1}{\phi}\log{\psi_Y(t)}\right] - \xi_0\right\}\right\},\,\,\,\forall t\in\mathbb{R}.
\end{equation*}

Since the $ch.f.$ on the left hand side of $(\ref{feller})$ is the $ch.f.$ of 
$$a_{n}\sum\limits_{i = 1}^n(X_i + d_n)
,$$ and $F\in\mathcal{D}(G)$, it follows  that $\Psi(t)$ is the $ch.f.$ of some $\alpha$-stable distribution (see Chapter 9 in  \cite{g} for example). Since 
\begin{eqnarray*}
\psi_Y(t) = \exp\left\{-\phi\left[b(\xi_0) - b\left(\xi_0 + \frac{1}{\phi}\log{\Psi(t)}\right)\right]\right\},
\end{eqnarray*}
we have the sufficiency part of the theorem.

Conversely, if (\ref{st_ch}) holds, then
$\Psi(t)$ is the $ch.f.$ of some $\alpha$-stable distribution. Therefore, there exist a random variable $Z$, an $i.i.d.$ sequence $\{X_i\}_{i\in \mathbb{N}}$ of random variables and real sequences $\{a_n\}_{n\in\mathbb{N}}$ and $\{d_n\}_{n\in\mathbb{N}}$ such that 
\begin{equation}\label{sum}
a_{n}\sum\limits_{i = 1}^n(X_i + d_n)\xrightarrow[n\rightarrow\infty]{d}Z.
\end{equation}

Denote the {\it ch.f.} of $Z$ and $X_1$ by $\Psi(t)$ and $\Psi_{X_1}(t)$, respectively. Let $\gamma_{n}(t) = \Psi_{X_1}(a_n t)\exp(ita_n d_n)$,  $n\in\mathbb N$. Then, the weak limit in (\ref{sum}) is equivalent to 
\begin{equation*}
(\gamma_{n}(t))^n \xrightarrow[n\rightarrow\infty]{}\Psi(t),\,\,\,\forall t\in\mathbb{R}.
\end{equation*}

From \cite{f}, we have that the above limit implies that 
$$n(\gamma_{n}(t)-1) \xrightarrow[n\rightarrow\infty]{}\log\Psi(t).$$

Since by hypothesis
$$\Psi(t) = \exp\left\{\phi\left\{b^{-1}\left(b(\xi_0) + \frac{1}{\phi}\log{\psi_Y(t)}\right) - \xi_0\right\}\right\},$$ we obtain that
$$n(\gamma_{n}(t)-1) \xrightarrow[n\rightarrow\infty]{}\phi\left\{b^{-1}\left(b(\xi_0) + \frac{1}{\phi}\log{\psi_Y(t)}\right) - \xi_0\right\},$$ which is equivalent to 
$$\lambda(\gamma_{\lambda}(t)-1) \xrightarrow[\lambda\rightarrow\infty]{}\phi\left\{b^{-1}\left(b(\xi_0) + \frac{1}{\phi}\log{\psi_Y(t)}\right) - \xi_0\right\}.$$

The above limit gives Equation (\ref{convergencia}) with $\gamma_{\lambda}(t)$ instead of $\varphi_{\lambda}(t)$. This completes the proof of the desired result.
\end{proof}

We now apply Theorem \ref{stable_th} to three cases of interest.

\begin{example}\label{ex_bn} Take $N_\lambda\sim\mbox{MP}(\lambda,W_{\phi})$ with $W_{\phi} \sim Gamma(\phi)$. In this case $N_{\lambda}\sim\mbox{NB}(\lambda,\phi)$ with probability function 
$$P(N_{\lambda}=n) = \frac{\Gamma(n + \phi)}{n! \Gamma(\phi)}\left(\frac{\lambda}{\lambda + \phi}\right)^n\left(\frac{\phi}{\lambda + \phi}\right)^{\phi}, \; \;  n = 0, 1, \cdots.$$ Also, we have $b(\theta) = -\log{(-\theta)}$ and $\xi_0 = -1$. Apply Theorem \ref{stable_th} to deduce that a random variable $Y$ is NB-stable if and only if 
\begin{eqnarray}\label{bn-stable}
\psi_Y(t) = \left\{1 - \phi^{-1}\log{\Psi(t)}\right\}^{-\phi},
\end{eqnarray}
where $\Psi(t)$ is the $ch.f.$ of some $\alpha$-stable distribution.
\end{example}

We emphasize that Theorem \ref{stable_th} generalizes Proposition 1 in \cite{mr}. To obtain their result it is enough to take $\phi=1$ in Example \ref{ex_bn}.

\begin{example}
Let $\Psi(t) = e^{-c|t|^{\alpha}}$, $t\in \mathbb{R}$, be the $ch.f.$ of a symmetric $\alpha$-stable distribution. By Equation (\ref{bn-stable}),
$\psi_Y(t) = \left(1 + \frac{c}{\phi}|t|^{\alpha}\right)^{-\phi}$
is the $ch.f.$ of some NB-stable distribution. In particular, for $\alpha = 2$, we have that
$\psi_Y(t) = \left(1 + \frac{c}{\phi}t^2\right)^{-\phi}$
is the $ch.f.$ of a NB-stable random variable $Y$ with density function
\begin{eqnarray*}
f_Y(y) = \left(\frac{\phi}{c}\right)^{\frac{\phi}{2} + \frac{1}{4}} 2^{\frac{1}{2} - \phi}\frac{\mathcal{K}_{\phi - \frac{1}{2}}\left(y\sqrt{\phi/c}\right)}{\sqrt{\pi}\Gamma(\phi)}y^{\phi - \frac{1}{2}},\,\,\,y\in\mathbb{R},
\end{eqnarray*}
where $\mathcal{K}_{\nu}(z) =  \frac{\Gamma\left(\nu + \frac{1}{2}\right)(2z)^{\nu}}{\sqrt{\pi}}\displaystyle\int_{0}^{\infty}\frac{\cos{u}}{(u^2 + z^2)^{\nu + \frac{1}{2}}}du$ is the modified Bessel function of the second kind; see Chapter 9 in \cite{as}.
\end{example}

\begin{example}\label{ex_pig}
Consider $N_\lambda\sim\mbox{MP}(\lambda,W_{\phi})$ with $W_{\phi}\sim \mbox{IG}(\phi)$. In this case $N_{\lambda}\sim\mbox{PIG}(\lambda,\phi)$ with probability function 
$$P(N_{\lambda}=n) = \sqrt{\frac{2}{\pi}} \left[\phi(\phi + 2\lambda)\right]^{-(n - \frac{1}{2})}\frac{e^{\phi}(\lambda\phi)^n}{n!}\mathcal{L}_{n - \frac{1}{2}}\left(\sqrt{\phi(\phi + 2\lambda)}\right),\,\,n = 0, 1, \cdots$$ where $\mathcal{L}_{\nu}(z) = \frac{1}{2}\int_{0}^{\infty}u^{\nu - 1}\exp\left\{-\frac{1}{2}z(u + u^{-1})\right\}du$ is the modified Bessel function of the third kind; see \cite{as}. Also, we have $b(\theta) = -\sqrt{-2\theta}$, $\xi_0 = -\frac{1}{2}$. Applying Theorem \ref{stable_th} we obtain that $Y$ is PIG-stable if and only if 
$\psi_Y(t) = \exp\left\{\phi\left(1 - \sqrt{1 - \frac{2}{\phi}\log{\Psi(t)}}\right)\right\}$,
where $\Psi(t)$ is the $ch.f.$ of some $\alpha$-stable distribution.

For instance, by taking the $ch.f.$ of a normal distribution with mean $\mu$ and variance $\sigma^2$, that is $\Psi(t) = e^{i\mu t -\frac{1}{2}\sigma^2t^2}$, it follows that
$\psi_Y(t) = \exp\left\{\phi\left(1 - \sqrt{1 - \phi^{-1}\left(2i\mu t - \sigma^2t^2\right)}\right)\right\}$,
which is the $ch.f.$ of the $\mbox{NIG}\left(\sqrt{\frac{\phi}{\sigma^2} + \frac{\mu^2}{\sigma^4}}, \frac{\mu}{\sigma^2}, 0, \sqrt{\phi\sigma^2}\right)$ distribution. In other words, the normal inverse-Gaussian distribution is PIG-stable.
\end{example}

\section{Properties of the limiting distribution}\label{lim_distributions}

In this section we obtain statistical properties of the limiting class of distributions arising from Theorem \ref{theo_conv} with $ch.f.$ (\ref{chfY}). The main result here is the stochastic representation of these distributions as a normal mean-variance mixture \cite{baretal1982} with latent effect belonging to an exponential family. We emphasize that this class of normal exponential family (NEF) mixture distributions is new in the literature.

\begin{proposition}\label{mixture}
 Let $Y$ be a random variable with $ch.f.$ (\ref{chfY}). Then $Y$ satisfies the following stochastic representation:
	\begin{equation*}\label{geral}
	Y \overset{d}= \mu W_{\phi} + \sigma\sqrt{W_{\phi}}Z,
	\end{equation*}
where $W_{\phi} \sim\mbox{EF}(\phi)$ and $Z \sim N(0,1)$ are independent and `$\overset{d}=$' stands for equality in distribution.
\end{proposition}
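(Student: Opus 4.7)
The plan is to compute the characteristic function of $\mu W_\phi + \sigma \sqrt{W_\phi}Z$ directly and verify that it matches $\psi_Y(t)$ in (\ref{chfY}); the uniqueness of characteristic functions then gives the distributional equality.

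First I would condition on $W_\phi$ and use the independence of $Z$ and $W_\phi$. Given $W_\phi = w$, the variable $\mu W_\phi + \sigma\sqrt{W_\phi}Z$ is $N(\mu w, \sigma^2 w)$, so its conditional characteristic function at $t$ equals $\exp\{w(it\mu - t^2\sigma^2/2)\}$. By the tower property,
\begin{equation*}
\psi_{\mu W_\phi + \sigma\sqrt{W_\phi}Z}(t) \;=\; E\!\left[\exp\!\left\{W_\phi\!\left(it\mu - \frac{t^2\sigma^2}{2}\right)\right\}\right].
\end{equation*}

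Next I would identify the right-hand side with the analytic extension of the characteristic function $\psi_{W_\phi}$ derived via Theorem \ref{sampson} and formula (\ref{FEchf}). Writing $s \equiv it\mu - t^2\sigma^2/2$, note that $\operatorname{Re}(s) = -t^2\sigma^2/2 \le 0$ and $W_\phi > 0$, so $|e^{sW_\phi}| \le 1$ and the expectation $M(s) \equiv E[e^{sW_\phi}]$ defines an analytic function of $s$ on $\{\operatorname{Re}(s) < 0\}$, continuous up to the imaginary axis. On that axis it agrees with $\psi_{W_\phi}$, which by (\ref{FEchf}) equals $\exp\{-\phi[b(\xi_0) - b(\xi_0 + s/\phi)]\}$. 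Since $b$ is analytic near $\xi_0$, the latter expression is itself analytic in a complex neighborhood of the imaginary axis, so by analytic continuation the two functions coincide wherever both are defined. Substituting $s = it\mu - t^2\sigma^2/2$ then produces exactly the right-hand side of (\ref{chfY}).

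The only delicate point I anticipate is this analytic-continuation step: (\ref{FEchf}) is stated for real arguments, while I need it evaluated at a complex $s$ with nonpositive real part. I would justify it either by the standard Morera/Fubini argument (differentiating under the integral sign shows $M(s)$ is holomorphic on $\{\operatorname{Re}(s)<0\}$) or, equivalently, by invoking the analytic extension $G(\cdot)$ provided by Theorem \ref{sampson}, which is valid throughout the strip where $b$ remains analytic. Once this is in place, the equality $\psi_{\mu W_\phi + \sigma\sqrt{W_\phi}Z}(t) = \psi_Y(t)$ for every $t \in \mathbb{R}$ is immediate, and the stochastic representation follows from the uniqueness theorem for characteristic functions.
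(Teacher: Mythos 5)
Your proposal is correct and follows essentially the same route as the paper: condition on $W_{\phi}$, use that the conditional law is $N(\mu w,\sigma^2 w)$ to get $E\bigl[\exp\{W_{\phi}(it\mu - t^2\sigma^2/2)\}\bigr]$, and identify this with $\psi_{W_\phi}$ evaluated at $it\mu - t^2\sigma^2/2$ via (\ref{FEchf}). The only difference is that you explicitly justify evaluating (\ref{FEchf}) at a complex argument with nonpositive real part by analytic continuation, a step the paper's proof performs without comment.
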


\begin{proof}
	By standard properties of condicional expectation and using the $ch.f.$ of $W_{\phi}$ given in (\ref{FEchf}), we obtain that	
	\begin{eqnarray*}
		\Psi_{\mu W_{\phi} + \sigma \sqrt{W_{\phi}}Z}(t) &=& E\left(\exp\left\{it(\mu W_{\phi} + \sigma \sqrt{W_{\phi}}Z)\right\}\right)  = E\left[E\left(\exp\{it(\mu W_{\phi} + \sigma \sqrt{W_{\phi}}Z)\}| W_{\phi}\right)\right] \\
		&=& \int\limits_{0}^{\infty}e^{it\mu w}E\left(e^{it\sigma\sqrt{w}Z}\right)f_{W_\phi}(w)dw
		= \int\limits_{0}^{\infty}\exp\left\{w\left(it\mu-\frac{1}{2}t^2\sigma^2\right)\right\}f_{W_\phi}(w)dw\\
		&=& \psi_{W_\phi}\left(it\mu - \frac{t^2\sigma^2}{2}\right) 
		= \exp\left\{-\phi\left[b(\xi_0) - b\left(\xi_0 + \frac{1}{\phi}\left(it\mu - \frac{1}{2}t^2\sigma^2 \right) \right)\right]\right\}, 
	\end{eqnarray*}
which is the characteristic function given in Proposition \ref{theo_conv}. 
\end{proof}

Since we rely on an Expectation-Maximization algorithm to estimate the parameters of the class of normal mean-variance mixture distributions, the stochastic representation given in the previous proposition plays an important role in this paper. Furthermore, this representation enables us to find explicit forms for the corresponding density function as stated in the following proposition (which proof follows directly and therefore it is omitted) and examples.

\begin{proposition}\label{general density}
Let $Y = \mu W_{\phi} + \sigma\sqrt{W_{\phi}}Z$ with $Z \sim N(0,1)$ and $W_{\phi}$ independent, $\mu \in \mathbb{R}$, $\sigma^2 > 0$ and $\phi > 0$. The density function of $Y$ is given by

\begin{align}\label{density}
f_Y(y) \!=\! \frac{e^{\frac{y\mu}{\sigma^2} \!-\! \phi b(\xi_0) \!+\! d(\phi)}}{\sqrt{2\pi\sigma^2}}\int_{0}^{\infty}\!
     e^{\phi g(w)+h(w)}\!
      w^{-\frac{1}{2}}\! \exp\left\{-\frac{1}{2}\left[\left(\frac{\mu^2}{\sigma^2}-2\phi\xi_0\right)w + \frac{y^2}{\sigma^2}\frac{1}{w}\right]\right\}dw,
\end{align}
 for $y\in\mathbb{R}$.
\end{proposition}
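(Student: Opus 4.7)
The plan is to integrate out the latent variable $W_\phi$ using the stochastic representation from Proposition \ref{mixture}. Conditioning on $W_\phi = w$, the independence of $Z$ from $W_\phi$ and the fact that $Z\sim N(0,1)$ imply that $Y \mid W_\phi = w \sim N(\mu w,\,\sigma^2 w)$, so the conditional density is simply Gaussian. Multiplying by the marginal density of $W_\phi$ from (\ref{EF_density}) and integrating gives
\begin{equation*}
f_Y(y)=\int_0^\infty \frac{1}{\sqrt{2\pi\sigma^2 w}}\exp\!\left\{-\frac{(y-\mu w)^2}{2\sigma^2 w}\right\}\exp\{\phi[w\xi_0-b(\xi_0)]+c(w;\phi)\}\,dw,
\end{equation*}
which is the starting point for the calculation.

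Next I would expand the quadratic in the Gaussian exponent as
\begin{equation*}
-\frac{(y-\mu w)^2}{2\sigma^2 w}=\frac{y\mu}{\sigma^2}-\frac{\mu^2}{2\sigma^2}w-\frac{y^2}{2\sigma^2}\frac{1}{w},
\end{equation*}
which cleanly separates the cross term $y\mu/\sigma^2$ (independent of $w$) from the $w$ and $1/w$ pieces. The factor $e^{y\mu/\sigma^2}$ and the constant $e^{-\phi b(\xi_0)}$ from the exponential family density can then be pulled out of the integral, together with the $(2\pi\sigma^2)^{-1/2}$ from the Gaussian normalization. Collecting the remaining $w$-linear terms yields the coefficient $\tfrac12(\mu^2/\sigma^2 - 2\phi\xi_0)$, matching the bracket displayed in (\ref{density}).

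Finally, to recover the precise form stated, I would use the standard additive decomposition $c(w;\phi)=\phi g(w)+h(w)+d(\phi)$ available for the exponential family under consideration, which extracts a pure normalizing term $d(\phi)$ depending only on $\phi$. This term combines with $-\phi b(\xi_0)+y\mu/\sigma^2$ in the prefactor, while the remaining $w$-dependent piece $\phi g(w)+h(w)$ stays inside the integrand. Putting everything together reproduces the formula (\ref{density}) exactly.

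The only real obstacle is cosmetic: matching the bookkeeping between the raw expression $c(w;\phi)$ appearing in (\ref{EF_density}) and the decomposition $\phi g(w)+h(w)+d(\phi)$ used in the statement; once that splitting is accepted, the derivation is a direct computation. No interchange of limits or delicate convergence argument is required, since the integrability of $f_Y$ is inherited from that of $f_{W_\phi}$ via the bound $\int f_{Y\mid W_\phi}(y\mid w)\,f_{W_\phi}(w)\,dw \le \int f_{W_\phi}(w)\,dw = 1$ on $\mathbb{R}$.
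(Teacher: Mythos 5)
Your derivation is correct and is precisely the direct computation the paper has in mind (the paper states the proof ``follows directly'' and omits it): condition on $W_\phi=w$ to get $Y\mid W_\phi=w\sim N(\mu w,\sigma^2 w)$, expand $-(y-\mu w)^2/(2\sigma^2 w)$ into the $y\mu/\sigma^2$, $w$, and $1/w$ pieces, and absorb $c(w;\phi)=d(\phi)+\phi g(w)+h(w)$ into the prefactor and integrand. The only quibble is your closing integrability remark: the displayed inequality as written would need $f_{Y\mid W_\phi}(y\mid w)\le 1$, which fails for small $w$; what you actually want is Tonelli in the $y$-variable, $\int_{\mathbb{R}}f_Y(y)\,dy=\int_0^\infty\bigl(\int_{\mathbb{R}}f_{Y\mid W_\phi}(y\mid w)\,dy\bigr)f_{W_\phi}(w)\,dw=1$.
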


\begin{example}\label{gamma mixed density}
Let $Y = \mu W_{\phi} + \sigma\sqrt{W_{\phi}}Z$ with $W_{\phi} \sim\mbox{Gamma}(\phi)$ independent of $Z \sim N(0,1)$, $\mu \in \mathbb{R}$, $\sigma^2 > 0$ and $\phi > 0$. In this case, we use $d(\phi) = \phi\log\phi - \log\Gamma(\phi)$, $g(w) = \log w$ and $h(w) = -\log{w}$ in Equation (\ref{density}) to obtain
\begin{eqnarray}\label{normal gamma density}
 f_Y(y) = \sqrt{\frac{2}{\pi\sigma^2}}\frac{\phi^{\phi}}{\Gamma(\phi)}e^{y\mu/\sigma^2}\mathcal{K}_{\phi - \frac{1}{2}}\left(\sqrt{\left(\frac{\mu^2}{\sigma^2} + 2\phi\right)\left(\frac{y^2}{\sigma^2}\right)}\right)\left(\frac{\frac{y^2}{\sigma^2}}{\frac{\mu^2}{\sigma^2} + 2\phi}\right)^{\frac{\phi}{2} - \frac{1}{4}},\, y\in\mathbb{R},
\end{eqnarray}
where $\mathcal{K}_{\cdot}(\cdot)$ is the modified Bessel function of the third kind. This function satisfies the property $\mathcal{K}_{\frac{1}{2}}(z) =\left(\frac{\pi}{2z}\right)^{\frac{1}{2}}e^{-z}$, for $z\in\mathbb{R}$. Using this fact and replacing $\phi = 1$ in Equation (\ref{normal gamma density}), we obtain the probability density function of the Asymmetric Laplace distribution.

\end{example}

\begin{example}\label{inverse gaussian mixed density}
If we assume $W_{\phi} \sim\mbox{IG}(\phi)$, then $d(\phi) = \frac{1}{2}\log{\phi},\, g(w) = -(2w)^{-1}$ and $h(w) = -\frac{1}{2}\log(2\pi w^3)$. Hence, it follows from Equation (\ref{density}) that
\begin{eqnarray}\label{normal inverse gaussian density}
f_Y(y)= \frac{1}{\pi}\sqrt{\frac{\phi}{\sigma^2}}\exp\left(\frac{y\mu}{\sigma^2} + \phi\right)\mathcal{K}_{-1}\left(\sqrt{\left(\frac{\mu^2}{\sigma^2} + \phi\right)\left(\frac{y^2}{\sigma^2} + \phi\right)}\right)\left(\frac{\mu^2 + \sigma^2\phi}{y^2 + \sigma^2\phi}\right)^{\frac{1}{2}},\quad y\in\mathbb R.
\end{eqnarray}
The density function (\ref{normal inverse gaussian density}) corresponds to a NIG distribution.
\end{example}

\begin{example}
Consider $W_{\phi}$ following a generalized hyperbolic secant distribution with dispersion parameter $\phi>0$; for more details on this distribution see \cite{bs}. In this case, the density function of $Y$, for $y \in \mathbb{R}$, can be expressed by
\begin{eqnarray*}
f_Y(y)=\frac{2^{\frac{\phi - 5}{2}}}{\sqrt{\pi^3\sigma^2}}\frac{\phi}{\Gamma(\phi)}\exp(\mu y/\sigma^2) E\left(\Gamma\left(\left|\phi + iU\right|^2/4\right)\right),\,y\in\mathbb{R},
\end{eqnarray*}
where  $U\sim GIG\left(\frac{3\pi\phi}{2} + \frac{\mu^2}{\sigma^2}, \frac{y^2}{\sigma^2}, \frac{1}{2}\right)$.
\end{example}

We conclude this section with a numerical illustration of the weak convergence obtained in Theorem \ref{theo_conv} through a small Monte Carlo simulation. We generate random samples (500 replicas) from the partial sums $\tilde{S}_{\lambda}$ with $\mbox{NB}(\lambda,\phi)$ and $\mbox{PIG}(\lambda,\phi)$ number of terms; we set $\phi=2$ and $\lambda=30,50,500$. The sequence $\{X_n\}_{n\geq1}$ is generated from the exponential distribution with mean equal to 1 (in this case $\mu=\sigma^2=1$). Figures \ref{graph sum mixed gamma} and \ref{graph sum mixed inverse gaussian} show the histograms of the generated random samples with the curve of the corresponding density function for the NB and PIG cases, respectively. As expected, we observe a good agreement between the histograms and the theoretical densities as $\lambda$ increases, which is according Theorem \ref{theo_conv}.

\begin{figure}[H]
\centering
\subfigure[$\lambda = 30$]{\includegraphics[width=\textwidth,height=0.2\textheight,keepaspectratio]{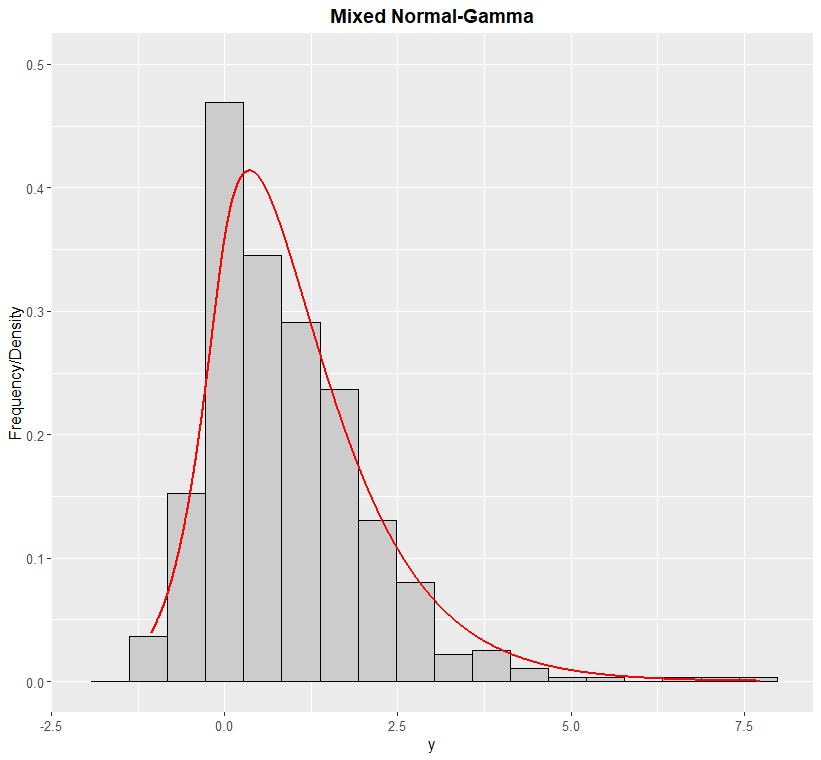}}
\subfigure[$\lambda = 50$]{\includegraphics[width=\textwidth,height=0.2\textheight,keepaspectratio]{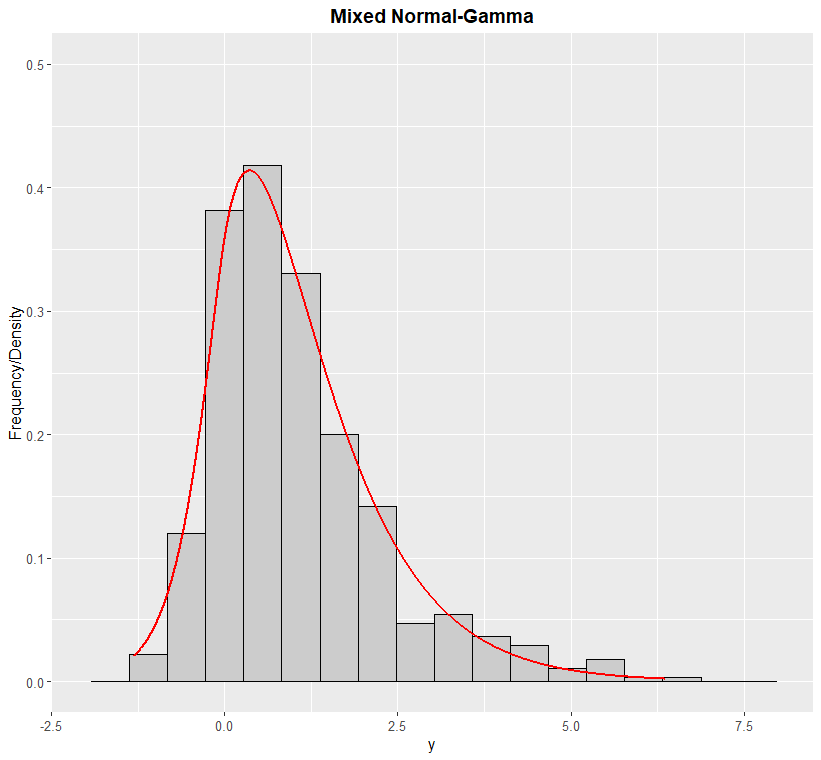}}
\subfigure[$\lambda = 500$]{\includegraphics[width=\textwidth,height=0.2\textheight,keepaspectratio]{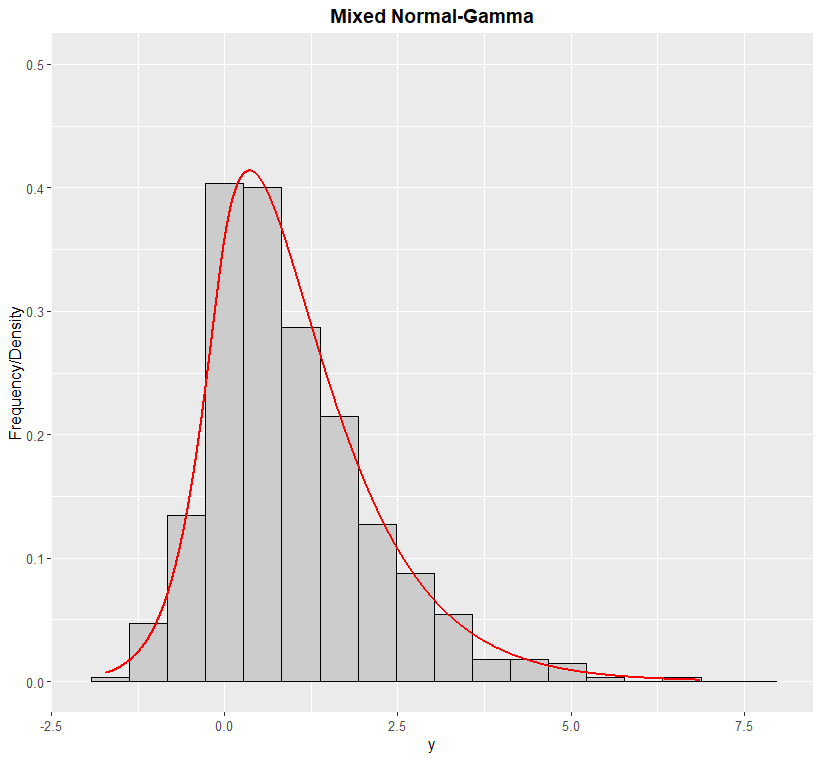}}
\caption{Histograms for the generated random sample from $\widetilde{S}_{\lambda}$ with $N_{\lambda} \sim\mbox{BN}(\lambda, 2)$ for $\lambda = 30, 50, 500$ and normal-gamma density function.}
\label{graph sum mixed gamma}
\end{figure}

\begin{figure}[H]
\centering
\subfigure[$\lambda = 30$]{\includegraphics[width=\textwidth,height=0.2\textheight,keepaspectratio]{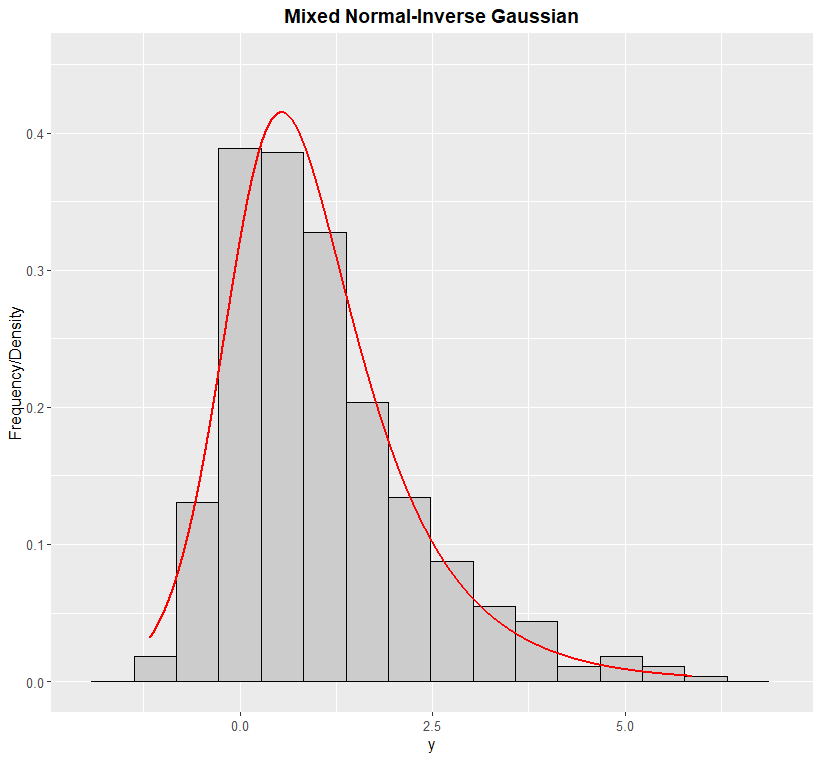}}
\subfigure[$\lambda = 50$]{\includegraphics[width=\textwidth,height=0.2\textheight,keepaspectratio]{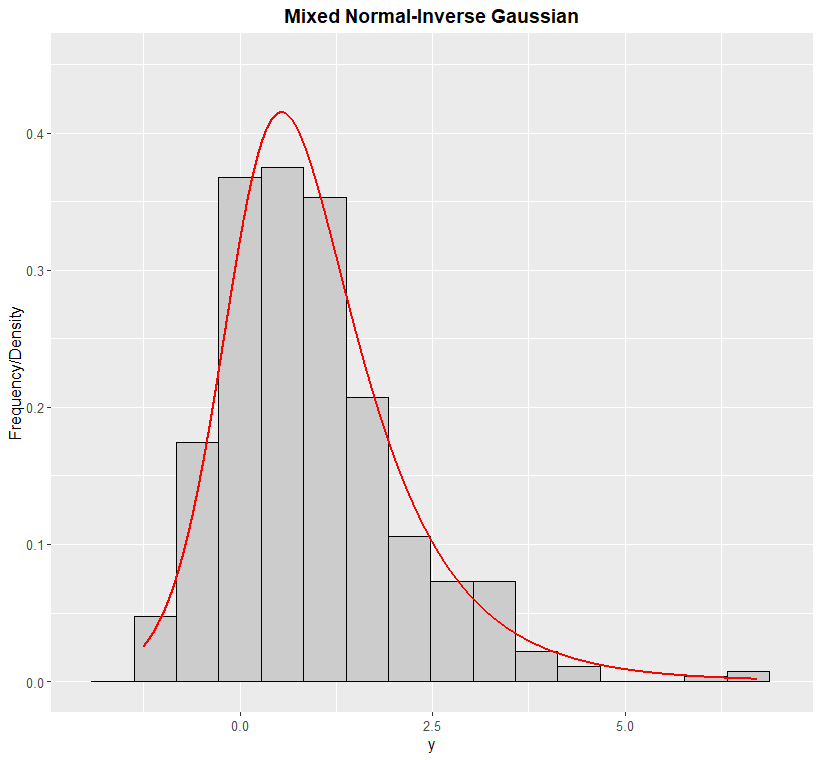}}
\subfigure[$\lambda = 500$]{\includegraphics[width=\textwidth,height=0.2\textheight,keepaspectratio]{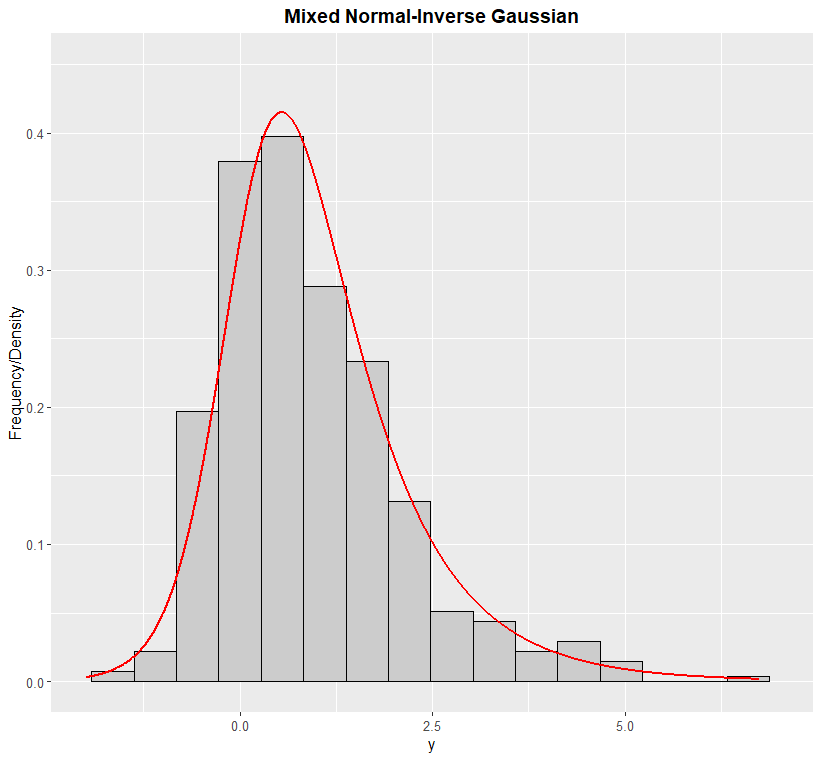}}
\caption{Histograms for the generated random sample from $\widetilde{S}_{\lambda}$ with $N_{\lambda} \sim\mbox{PIG}(\lambda, 2)$ for $\lambda = 30, 50, 500$ and normal-gamma density function.}
\label{graph sum mixed inverse gaussian}
\end{figure}

\section{Inference for NEF laws}\label{inference}

In this section we discuss estimation of the parameters of the limiting class of normal-exponential family laws obtained in Section \ref{theo_results}. We consider the method of moments and maximum likelihood estimation via Expectation-Maximization algorithm. Throughout this section, ${\bf Y}=(Y_1,\ldots,Y_n)^\top$ denotes a random sample ({\it i.i.d.}) from the NEF distribution and $n$ stands for the sample size.

\subsection{Method of moments}\label{MM}

Let $Y\sim\mbox{NEF}(\mu,\sigma^2,\phi)$. By using the characteristic function of the NEF distributions given in Proposition \ref{theo_conv}, we see that the first four cumulants of $Y$ are given by
\begin{equation}\label{k4}
    \begin{aligned}
 \kappa_1 &\equiv E(Y) = \mu,\\ 
 \kappa_2 &\equiv\mbox{Var}(Y) = \dfrac{\mu^2b''(\xi_0) + \phi\sigma^2 }{\phi},\\
 \kappa_3 &\equiv E\left((Y-\mu)^3\right)= \dfrac{\mu^3b^{(3)}(\xi_0) + 3\phi\sigma^2\mu b''(\xi_0)}{\phi^2},\\
 \kappa_4 &\equiv E\left((Y-\mu)^4\right)= \dfrac{\mu^4b^{(4)}(\xi_0) + 6\phi\sigma^2\mu^2b^{(3)}(\xi_0) + 3\phi^2\sigma^4b''(\xi_0)}{\phi^3},
 \end{aligned}
\end{equation}
where $b^{(3)}(\cdot)$ and $b^{(4)}(\cdot)$ are the third and forth derivatives of the function $b(\cdot)$. The skewness coefficient and the excess of kurtosis, denoted respectively by $\beta_1$ and $\beta_2$, can be obtained from the well-known relationships 
\begin{eqnarray}\label{kstan}
\beta_1  = \frac{\kappa_3}{\kappa_2^{3/2}} \quad \textrm{and} \quad \beta_2  = \frac{\kappa_4}{\kappa_2^2} - 3.
\end{eqnarray}

The following examples give explicit expressions for the first four cumulants for some special cases of the NEF class of distributions.

\begin{example}
Consider $Y$ following a NEF distribution with $W_{\phi} \sim \textrm{Gamma} (\phi)$. In this case $Y$ follows a normal-gamma distribution. Also, $b(\theta) = -\log(-\theta)$ for $\theta<0$ and $\xi_0 = -1$. By using these quantities and taking the derivatives of $b(\cdot)$ in (\ref{k4}) and (\ref{kstan}) we obtain that
$$\begin{cases}
 &\kappa_1 =  \mu,  \smallskip \\[0.4cm]  
 &\kappa_2 =  \dfrac{\mu^2 + \phi\sigma^2}{\phi},  \smallskip \\[0.4cm]  
 &\kappa_3 =\dfrac{2\mu^3 + 3\mu\phi\sigma^2}{\phi^2},  \smallskip \\[0.4cm]  
 &\kappa_4 =\dfrac{6\mu^4 + 12\mu^2\phi\sigma^2 + 3\phi^2\sigma^4}{\phi^3},
\end{cases}
\; \; \; \; 
\begin{cases}
 &\beta_1 = \dfrac{2\mu^3 + 3\mu\phi\sigma^2}{\sqrt{\phi}(\mu^2 + \phi\sigma^2)^{\frac{3}{2}}},  \medskip \\[0.4cm]  
 &\beta_2 = \dfrac{6\mu^4 + 12\mu^2\phi\sigma^2 + 3\phi^2\sigma^4}{\phi(\mu^2 + \phi\sigma^2)^2}.
\end{cases}
$$
\end{example}

\begin{example}
Now consider $W_{\phi} \sim \textrm{IG}(\phi)$. Then, $b(\theta) = -\sqrt{-2\theta}$ for $\theta<0$ and $\xi_0 = -1/2$. We have that $Y$ follows a NIG distribution with parameters $\mu$, $\sigma^2$ and $\phi$. Its central moments and cumulants are given by
$$\begin{cases}
 &\kappa_1 =  \mu,  \smallskip \\[0.4cm]  
 &\kappa_2 =  \dfrac{\mu^2 + \phi\sigma^2}{\phi},  \smallskip \\[0.4cm]  
 &\kappa_3 = \dfrac{3\mu^3 + 3\mu\phi\sigma^2}{\phi^2},  \smallskip \\[0.4cm]    
 &\kappa_4 = \dfrac{15\mu^4 + 18\mu^2\phi\sigma^2 + 3\phi^2\sigma^4}{\phi^3},
\end{cases}
\; \; \; \; 
\begin{cases}
 &\beta_1 = \dfrac{3\mu^3 + 3\mu\phi\sigma^2}{\sqrt{\phi}(\mu^2 + \phi\sigma^2)^{\frac{3}{2}}},  \medskip \\[0.4cm]    
 &\beta_2 = \dfrac{15\mu^4 + 18\mu^2\phi\sigma^2 + 3\phi^2\sigma^4}{\phi(\mu^2 + \phi\sigma^2)^2}.
\end{cases}
$$
\end{example}

\begin{example}
For the case $W_{\phi} \sim\mbox{GHS}(\phi)$, it follows that $b(\theta) = \frac{1}{2}\log\left(1 + \tan^2{\theta}\right)$ for $\theta\in\mathbb R$ and $\xi_0 =  -3\pi/4$. We have that $Y$ follows a normal-generalized hyperbolic secant distribution. Its central moments and cumulants are given by
$$\begin{cases}
 &\kappa_1 =  \mu,  \smallskip \\[0.4cm]  
 &\kappa_2 =  \dfrac{2\mu^2 + \sigma^2\phi}{\phi},  \smallskip \\[0.4cm]  
 &\kappa_3 = \dfrac{4\mu^3 + 6\mu\sigma^2\phi}{\phi^2},  \smallskip \\[0.4cm]    
 &\kappa_4 = \dfrac{16\mu^4 + 24\sigma^2\phi + 6\sigma^4\phi^2}{\phi^3},
\end{cases}
\; \; \; \; 
\begin{cases}
 &\beta_1 = \dfrac{4\mu^3 + 6\mu\sigma^2\phi}{\sqrt{\phi}(2\mu^2 + \sigma^2\phi)^{\frac{3}{2}}},  \medskip \\[0.4cm]    
 &\beta_2 = \dfrac{16\mu^4 + 24\sigma^2\phi + 6\sigma^4\phi^2}{\phi(2\mu^2 + \sigma^2\phi)^2}.
\end{cases}
$$
\end{example}

Let us discuss the estimation procedure. Since we have three parameters, we need three equations to estimate them. We use the three first moments $\mu_k \equiv E(Y^k)$ and its respective empirical quantities $M_k \equiv \dfrac{1}{n}\sum\limits_{i = 1}^n Y_i^k$ to do this job, where $k=1,2,3$ and $\mu_1\equiv \mu$. The theoretical moments can be obtained from the cumulants by using the relationships $\mu_1 = \kappa_1$, $\mu_2 = \kappa_2 + \mu_1^2$ and $\mu_3  = \kappa_3 + 3\mu_1\mu_2 - 2\mu_1^3$.

By equating theoretical moments with their empirical quantities, the method of moments (MM) estimators are obtained as the solution of the following system of non-linear equations: 
$$\left\{\begin{array}{rc}
&\widetilde\mu_1 =M_1\\ \noalign{\smallskip}
&\widetilde\mu_2 =M_2\\ \noalign{\smallskip}
&\widetilde\mu_3 =M_3
\end{array}\right. \Longrightarrow \left\{\begin{array}{ll}
\widetilde\mu   = M_1\\ \noalign{\smallskip}
\dfrac{\widetilde\mu^2b''(\xi_0) + \widetilde\phi\widetilde\sigma^2 }{\widetilde\phi} + \mu_1^2 = M_2\\ \noalign{\smallskip}
\dfrac{\widetilde\mu^3b^{(3)}(\xi_0) + 3\widetilde\phi\widetilde\sigma^2\widetilde\mu b''(\xi_0)}{\widetilde\phi^2} + 3\widetilde\mu_1\widetilde\mu_2 - 2\widetilde\mu_1^3 = M_3
\end{array}\right.$$

The solution of the above system of equations, denoted by $\widetilde\mu$, $\widetilde\sigma^2$ and $\widetilde\phi$, is the MM estimator and is given explicitly by
\begin{empheq}[left=\empheqlbrace]{align}
\nonumber \widetilde{\mu} &=M_1,\\ \noalign{\smallskip}
\nonumber\widetilde\sigma^2 &=M_2 - M_1^2\left(1 + \dfrac{b''(\xi_0)}{\widetilde{\phi}}\right), 
\end{empheq}
where $\widetilde{\phi}$ is the admissible solution of the quadratic equation
\begin{align}
\nonumber    \left(3M_1M_2 - 2M_1^3 - M_3\right){\widetilde\phi}^2 +
    b''(\xi_0)(3M_1M_2 - 3M_1^3)\widetilde\phi +
    M_1^3(b^{(3)}(\xi_0) - 3b''(\xi_0)^2) = 0.
\end{align}

A potential problem of the MM estimators is that estimates can lie outside of the parameter space, specially under small sample sizes. When admissible MM estimates are available, they also can be used as initial guesses for the EM-algorithm, as discussed in the sequence.

\subsection{Expectation-Maximization algorithm}\label{secaoEM}

In this section we obtain the Expectation-Maximization (EM) algorithm to find the maximum likelihood estimators for the parameters of the model. From the stochastic representation of the NEF laws, we can use $W_{\phi}$ as the latent variable to construct such estimation algorithm. 

Consider the complete data $(Y_1, W_{\phi 1}), \cdots, (Y_n, W_{\phi n})$, where $Y_1,\ldots,Y_n$ are observable variables with respective latent effects $W_{\phi 1},\ldots,W_{\phi n}$. Let $\Psi = (\mu, \sigma^2, \phi)^\top$ be the parameter vector.

The complete log-likelihood function is $\ell_c(\Psi) = \sum \limits_{i = 1}^n\log\{P(Y_i = y_i|W_{\phi i} = w_i)f_{W_{\phi}}(w_i)\}$, where $f_{W_{\phi}}(\cdot)$ is the density function of the exponential family given in Equation (\ref{EF_density}). From now on, we assume that the function $c(\cdot,\cdot)$ can be expressed as $c(w;\phi)=d(\phi)+\phi g(w)+h(w)$, with $d(\cdot)$ a three times differentiable function (see \cite{bs}). For the gamma case, we have that $d(\phi)=\phi\log\phi-\log\Gamma(\phi)$, $g(w)=\log w$ and $h(w)=-\log w$. By assuming $W_{\phi}\sim\mbox{IG}(\phi)$, we get $d(\phi)=\frac{1}{2}\log\phi$, $g(w)=-\frac{1}{2w}$ and $h(w)=-\frac{1}{2}\log(2\pi w^3)$ as well.

More explicitly, we obtain that the complete log-likelihood function takes the form
\begin{align*}
    \ell_c(\Psi) \propto \sum \limits_{i = 1}^n \left\{-\frac{1}{2}\log\sigma^2 - \frac{1}{2\sigma^2}\frac{y_i^2}{w_i} + \frac{\mu}{\sigma^2}y_i - \frac{\mu^2}{2\sigma^2}w_i + d(\phi) + \phi\left[w_i\xi_0 - b(\xi_0) + g(w_i)\right]\right\}.
\end{align*}

We now obtain the E-step and M-step of the EM algorithm with details. We denote by $\Psi^{(r)}$ the estimate of the parameter vector $\Psi$ in the $r$th loop of the EM-algorithm.

{\flushleft \textbf{E-step.}} Here, we need to find the conditional expectation of $\ell_c(\Psi)$ given the observable random variables $Y=(Y_1,\ldots,Y_n)^\top$. We denote this conditinal expectation by $Q$, which assumes the form
\begin{align*}
    \nonumber & Q(\Psi; \Psi^{(r)}) \equiv E(\ell_c(\Psi)|Y=y; \Psi^{(r)})\\
     &\propto \sum \limits_{i = 1}^n\left\{-\frac{1}{2}\log\sigma^2 - \frac{1}{2\sigma^2}y_i^2\gamma_i^{(r)} + \frac{\mu}{\sigma^2}y_i - \frac{\mu^2}{2\sigma^2}\alpha_i^{(r)} + d(\phi) + \phi\left[\xi_0\alpha_i^{(r)} - b(\xi_0)  + \delta_i^{(r)}\right]\right\},
\end{align*}
where $\gamma_i^{(r)} \equiv E\left(W_{\phi i}^{-1}|Y_i=y_i; \Psi^{(r)}\right)$, $\alpha_i^{(r)} \equiv E\left(W_{\phi i}|Y_i=y_i; \Psi^{(r)}\right)$ and $\delta_i^{(r)} \equiv E\left(g(W_{\phi i})|Y_i=y_i; \Psi^{(r)}\right)$, for $i = 1, \ldots, n$.

In the following, we obtain the conditional expectations above for the gamma and inverse-Gaussian cases. For simplicity of notation, the index $i$ is omitted.

\begin{proposition} Assume $W_{\phi} \sim \mathrm{Gamma}(\phi)$. Then, for $K, L \in \mathbb{Z}$, we have that
\begin{eqnarray*}
E\left(W_{\phi}^Kg(W_{\phi})^L\big|Y=y\right) &=& \dfrac{\mathcal{K}_{\phi + K - \frac{1}{2}}(\sqrt{ab})}{\mathcal{K}_{\phi - \frac{1}{2}}(\sqrt{ab})} \left(\dfrac{b}{a}\right)^{\frac{K}{2}}E_U\left(g(W_{\phi})^L\right),
\end{eqnarray*}
where $U \sim GIG\left(a, b, p\right)$, $a = \frac{\mu^2}{\sigma^2} + 2\phi$, $b = \frac{y^2}{\sigma^2}$, $p = \phi + K - \frac{1}{2}$, $\mathcal{K}_{\cdot}(\cdot)$ is the modified Bessel function of the third kind and $E_U(\cdot)$ denotes expecation taken with respect to the distribution of $U$. 
\end{proposition}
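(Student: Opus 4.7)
The plan is to identify the conditional law of $W_\phi$ given $Y=y$ via Bayes' rule, recognize it as a Generalized Inverse Gaussian (GIG) distribution, and then use a tilting identity relating GIG densities of different indices to absorb the factor $w^K$ into a shifted GIG density.

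By Proposition \ref{mixture}, conditionally on $W_\phi = w$ we have $Y \sim N(\mu w, \sigma^2 w)$. Combined with the $\mathrm{Gamma}(\phi)$ marginal $f_{W_\phi}(w) \propto w^{\phi-1} e^{-\phi w}$ (the EF form with $b(\theta) = -\log(-\theta)$ and $\xi_0 = -1$), Bayes' rule gives
\begin{equation*}
f_{W_\phi \mid Y}(w \mid y) \;\propto\; w^{\phi - 3/2}\,\exp\!\left\{-\frac{1}{2}\left(a w + \frac{b}{w}\right)\right\},
\end{equation*}
with $a = \mu^2/\sigma^2 + 2\phi$ and $b = y^2/\sigma^2$. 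This is the kernel of a $\mathrm{GIG}(a,b,\phi-1/2)$ density, so $W_\phi \mid Y=y \sim \mathrm{GIG}(a,b,\phi-1/2)$.

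Writing the $\mathrm{GIG}(a,b,p)$ density as $f_p(w) = \frac{(a/b)^{p/2}}{2\mathcal{K}_p(\sqrt{ab})}\,w^{p-1}\,e^{-(aw+b/w)/2}$, a direct comparison of normalizing constants yields the key identity
\begin{equation*}
w^K f_p(w) \;=\; \left(\frac{b}{a}\right)^{K/2}\frac{\mathcal{K}_{p+K}(\sqrt{ab})}{\mathcal{K}_p(\sqrt{ab})}\,f_{p+K}(w).
\end{equation*}
Applying this identity with $p = \phi - 1/2$ gives
\begin{equation*}
E\!\left(W_\phi^K g(W_\phi)^L \mid Y=y\right) \;=\; \int g(w)^L\, w^K f_{\phi-1/2}(w)\,dw \;=\; \left(\frac{b}{a}\right)^{K/2}\frac{\mathcal{K}_{\phi+K-1/2}(\sqrt{ab})}{\mathcal{K}_{\phi-1/2}(\sqrt{ab})}\,E_U\!\left(g(U)^L\right),
\end{equation*}
where $U \sim \mathrm{GIG}(a,b,\phi+K-1/2)$, matching the stated formula.

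The main (minor) obstacle is purely bookkeeping: correctly grouping the exponent after Bayes to read off the $\mathrm{GIG}$ triple $(a,b,p)$, and then carefully tracking the ratio $(b/a)^{K/2}\mathcal{K}_{p+K}/\mathcal{K}_p$ produced when shifting the index by $K$. There are no integrability subtleties, since $\mathrm{Gamma}(\phi)$ is supported on $(0,\infty)$ and the factor $\exp(-b/(2w))$ in the GIG kernel controls the behavior as $w \to 0^+$, while $\exp(-aw/2)$ controls $w\to\infty$ for any polynomial or logarithmic factor $w^K g(w)^L$.
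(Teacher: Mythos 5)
Your proof is correct and follows essentially the same route as the paper's: apply Bayes' rule, recognize that the posterior of $W_\phi$ given $Y=y$ is (proportional to) a GIG kernel, and absorb the factor $w^K$ by shifting the GIG index from $\phi-\tfrac12$ to $\phi+K-\tfrac12$, with the ratio $\bigl(\tfrac{b}{a}\bigr)^{K/2}\mathcal{K}_{\phi+K-1/2}(\sqrt{ab})/\mathcal{K}_{\phi-1/2}(\sqrt{ab})$ coming from the normalizing constants. The paper carries this out in one pass by dividing by the explicit normal-gamma density of $Y$ and reading off the GIG kernel directly, whereas you package the same computation as a clean ``tilting identity''; the content is identical.
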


\begin{proof}
We have that
\begin{eqnarray*}
E\left(W_{\phi}^Kg^L(W_{\phi})\Big|Y\right) &=& \int_{0}^{\infty} w^Kg^L(w) \dfrac{f_{Y|W}(y|w)f_W(w)}{f_Y(y)}dw\\
             &=& \dfrac{1}{f_Y(y)} \int_{0}^{\infty}  \frac{w^Kg(w)^L}{\sqrt{2\pi\sigma^2w}}e^{-\frac{(y-\mu w)^2}{2\sigma^2w} + \phi[-w + \log(\phi) + \log(w)] - \log\Gamma(\phi)-\log w}dw.
\end{eqnarray*}

By using the explicit form of the normal-gamma distribution density given in Expression (\ref{normal gamma density}), we get
\begin{align*}
E\left(W_{\phi}^Kg^L(W_{\phi})\Big|Y\right)& =  \dfrac{\left(\frac{\mu^2 + 2\phi\sigma^2}{y^2}\right)^{\frac{\phi}{2} - \frac{1}{4}}}{2\mathcal{K}_{\phi - \frac{1}{2}}\left(\sqrt{\left[\frac{\mu^2}{\sigma^2}+2\phi\right]\frac{y^2}{\sigma^2}}\right)}\\
&\times\int_{0}^{\infty}g(w)^L\underbrace{w^{\left(\phi + K - \frac{1}{2}\right) - 1}\exp\left\{-\frac{1}{2}\left[\left(\frac{\mu^2}{\sigma^2} + 2\phi\right)w + \left(\frac{y^2}{\sigma^2}\right)\frac{1}{w}\right]\right\}}_{\textrm{Kernel GIG}\left(\frac{\mu^2}{\sigma^2} + 2\phi, \frac{y^2}{\sigma^2}, \phi + K - \frac{1}{2}\right)}dw.
\end{align*}

Denoting $U \sim GIG\left(a, b, p\right)$ with $a = \frac{\mu^2}{\sigma^2} + 2\phi$, $b = \frac{y^2}{\sigma^2}$ and $p = \phi + K - \frac{1}{2}$ and noting that the integrand above is the kernel of a GIG density function, we obtain the desired result.
\end{proof}

\begin{example}\label{espCondGamma}
Let $Y = \mu W_{\phi} + \sigma\sqrt{W_{\phi}}Z$ with $Z \sim N(0,1)$ and $W_{\phi} \sim \textrm{Gamma}(\phi)$ independent of each other, $\mu \in \mathbb{R}$, $\sigma^2 > 0$ and $\phi > 0$. Replacing $(K,L)=(1,0)$,  $(K,L)=(-1,0)$ and $(K,L)=(0,1)$ in the previous proposition, we get
\begin{eqnarray*}
\alpha \equiv E(W_{\phi}| Y = y) = \dfrac{\mathcal{K}_{\phi + \frac{1}{2}}\left(\sqrt{\left(\frac{\mu^2}{\sigma^2} + 2\phi\right)\left(\frac{y^2}{\sigma^2}\right)}\right)}{\mathcal{K}_{\phi - \frac{1}{2}}\left(\sqrt{\left(\frac{\mu^2}{\sigma^2} + 2\phi\right)\left(\frac{y^2}{\sigma^2}\right)}\right)}\left(\dfrac{y^2}{\mu^2 + 2\phi\sigma^2}\right)^{\frac{1}{2}},
\end{eqnarray*}
\begin{eqnarray*}
\gamma \equiv E\left({W_{\phi}}^{-1}| Y = y\right) = \dfrac{\mathcal{K}_{\phi - \frac{3}{2}}\left(\sqrt{\left(\frac{\mu^2}{\sigma^2} + 2\phi\right)\left(\frac{y^2}{\sigma^2}\right)}\right)}{\mathcal{K}_{\phi - \frac{1}{2}}\left(\sqrt{\left(\frac{\mu^2}{\sigma^2} + 2\phi\right)\left(\frac{y^2}{\sigma^2}\right)}\right)}\left(\dfrac{y^2}{\mu^2 + 2\phi\sigma^2}\right)^{-\frac{1}{2}}
\end{eqnarray*}
and
\begin{eqnarray*}
\delta = E(\log W_{\phi} | Y = y) = \frac{1}{2}\log\left(\frac{\mu^2 + 2\phi\sigma^2}{y^2}\right) + \frac{\mathcal{K}'_{\phi - \frac{1}{2}}\left(\sqrt{\left(\frac{\mu^2}{\sigma^2} + 2\phi \right)\left(\frac{y^2}{\sigma^2}\right)}\right)}{\mathcal{K}_{\phi - \frac{1}{2}}\left(\sqrt{\left(\frac{\mu^2}{\sigma^2} + 2\phi \right)\left(\frac{y^2}{\sigma^2}\right)}\right)}.
\end{eqnarray*}
\end{example}

We now present explicit expressions for the conditional expectation for the NIG case.

\begin{proposition}\label{espcondicionalIG}
Consider $W_{\phi} \sim \textrm{IG}(\phi)$. Then, for $K, L \in \mathbb{Z}$, we obtain that
\begin{eqnarray*}
E\left(W_{\phi}^Kg(W_{\phi})^L\big|Y=y\right) = \dfrac{\mathcal{K}_{K - 1}(\sqrt{ab})}{\mathcal{K}_{-1}(\sqrt{ab})} \left(\dfrac{b}{a}\right)^{\frac{K}{2}}E_U\left(g(W_{\phi})^L\right), 
\end{eqnarray*}
where $U \sim\mbox{GIG}\left(a, b, p\right)$, $a = \frac{\mu^2}{\sigma^2} + \phi$, $b = \frac{y^2}{\sigma^2} + \phi$.
\end{proposition}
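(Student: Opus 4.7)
The plan is to mirror exactly the argument used in the preceding gamma proposition, adapting the algebra to the inverse-Gaussian latent structure. I would start by writing the conditional expectation as an integral via Bayes' rule,
\begin{equation*}
E\bigl(W_\phi^K g(W_\phi)^L \mid Y = y\bigr) = \frac{1}{f_Y(y)}\int_0^\infty w^K g(w)^L f_{Y\mid W}(y\mid w)\, f_{W_\phi}(w)\, dw,
\end{equation*}
then substitute the normal conditional density $f_{Y\mid W}(y\mid w) = (2\pi \sigma^2 w)^{-1/2}\exp\{-(y-\mu w)^2/(2\sigma^2 w)\}$ coming from the stochastic representation in Proposition \ref{mixture}, together with the $\mathrm{IG}(\phi)$ density obtained from (\ref{EF_density}) using $b(\xi_0) = -1$, $\xi_0 = -1/2$ and the explicit $c(w;\phi)$ recorded after Example \ref{inverse gaussian mixed density} (namely $d(\phi)=\tfrac12\log\phi$, $g(w) = -1/(2w)$, $h(w) = -\tfrac12\log(2\pi w^3)$).

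Next I would expand the quadratic $(y-\mu w)^2/(2\sigma^2 w)$ and collect the $w$-dependent terms with the corresponding terms from the IG density. The $\exp\{y\mu/\sigma^2\}$ piece and the constants $e^{\phi}\sqrt{\phi}$ come out of the integral, while what remains inside is
\begin{equation*}
w^{K-3/2}\, g(w)^L\, \exp\!\left\{-\tfrac{1}{2}\!\left[\bigl(\tfrac{\mu^2}{\sigma^2}+\phi\bigr)w + \bigl(\tfrac{y^2}{\sigma^2}+\phi\bigr)w^{-1}\right]\right\},
\end{equation*}
which is precisely (up to the factor $g(w)^L$) the kernel of the $\mathrm{GIG}(a,b,p)$ density with $a=\mu^2/\sigma^2+\phi$, $b=y^2/\sigma^2+\phi$ and $p=K-1$. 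Multiplying and dividing by the appropriate normalizing constant of this GIG, the integral rewrites as $E_U(g(W_\phi)^L)$ times the ratio $(b/a)^{K/2}\mathcal{K}_{K-1}(\sqrt{ab})$ divided by an inverse-Gaussian-type constant.

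Finally, I would divide by $f_Y(y)$, which is given in closed form by (\ref{normal inverse gaussian density}) in terms of $\mathcal{K}_{-1}(\sqrt{ab})$ (note $\mu^2/\sigma^2+\phi$ and $y^2/\sigma^2+\phi$ appear there already, making the substitution of $a$ and $b$ transparent). The extraneous constants $e^{y\mu/\sigma^2}$, $e^{\phi}$, $\sqrt{\phi/\sigma^2}/\pi$ and powers of the ratio $(a/b)^{1/2}$ coming from (\ref{normal inverse gaussian density}) cancel cleanly against the corresponding constants produced in the previous step, leaving
\begin{equation*}
E\bigl(W_\phi^K g(W_\phi)^L \mid Y = y\bigr) = \frac{\mathcal{K}_{K-1}(\sqrt{ab})}{\mathcal{K}_{-1}(\sqrt{ab})}\left(\frac{b}{a}\right)^{K/2} E_U(g(W_\phi)^L).
\end{equation*}

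The only part requiring real care is the bookkeeping in the cancellation step: one must verify that all constants (the $e^{y\mu/\sigma^2}$, $e^\phi$, the Bessel-$\mathcal{K}_{-1}$ factor, and the $(a/b)^{1/2}$ factor appearing in (\ref{normal inverse gaussian density})) indeed line up so as to leave only the claimed ratio of Bessel functions and the factor $(b/a)^{K/2}$. No analytic subtlety beyond this is anticipated; the identification of the GIG kernel and the use of its normalizing constant, already exploited in the gamma case, drive the proof.
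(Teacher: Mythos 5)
Your argument is correct and is essentially the paper's own proof: Bayes' rule, substitution of the normal conditional density and the $\mathrm{IG}(\phi)$ density, identification of the $\mathrm{GIG}\left(\frac{\mu^2}{\sigma^2}+\phi,\,\frac{y^2}{\sigma^2}+\phi,\,K-1\right)$ kernel, and cancellation of constants against the closed-form NIG density (\ref{normal inverse gaussian density}). One bookkeeping slip: the power of $w$ in your displayed kernel should be $w^{K-2}$, not $w^{K-3/2}$ (the $w^{-1/2}$ from the normal conditional density and the $w^{-3/2}$ from the IG density combine with $w^{K}$), and $w^{K-2}=w^{(K-1)-1}$ is exactly what matches the GIG index $p=K-1$ that you correctly use in the conclusion.
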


\begin{proof}
We have that
\begin{eqnarray*}
E\left(W_{\phi}^Kg(W_{\phi})^L\big|Y=y\right) &=& \int_{0}^{\infty} w^Kg^L(w) \dfrac{f_{Y|W}(y|w)f_W(w)}{f_Y(y)}dw\\
             &=& \dfrac{1}{f_Y(y)} \int_{0}^{\infty} w^Kg(w)^L \frac{1}{\sqrt{2\pi\sigma^2w}}e^{-\frac{(y-\mu w)^2}{2\sigma^2w}}e^{-\frac{\phi}{2}\left(w+\frac{1}{w}\right) + \frac{1}{2}\left(\log\phi - \log(2\pi w^3)\right)}dw.
\end{eqnarray*}

Let $f_Y(y)$ be the NIG density function as given in (\ref{normal inverse gaussian density}) and denote $U \sim GIG\left(a, b, p\right)$ with $a = \frac{\mu^2}{\sigma^2} + \phi$, $b = \frac{y^2}{\sigma^2} + \phi$ and $p = K - 1$. It follows that
\begin{align*}
E\left(W_{\phi}^Kg(W_{\phi})^L\big|Y=y\right) &=  \dfrac{\left(\frac{y^2 + \phi\sigma^2}{\mu^2 + \phi\sigma^2}\right)^{\frac{1}{2}}}{\mathcal{K}_{-1}\left(\sqrt{\left(\frac{\mu^2}{\sigma^2} + \phi\right)\left(\frac{y^2}{\sigma^2} + \phi\right)}\right)}\\
&\times\int_{0}^{\infty}g(w)^L\underbrace{w^{\left(K - 1\right) - 1}\exp\left\{-\frac{1}{2}\left[\left(\frac{\mu^2}{\sigma^2} + \phi\right)w + \left(\frac{y^2}{\sigma^2} + \phi\right)\frac{1}{w}\right]\right\}}_{\textrm{Kernel GIG}\left(\frac{\mu^2}{\sigma^2} + \phi, \frac{y^2}{\sigma^2} + \phi, K - 1\right)}dw\\
&= \dfrac{\mathcal{K}_{K - 1}(\sqrt{ab})}{\mathcal{K}_{-1}(\sqrt{ab})} \left(\dfrac{b}{a}\right)^{\frac{K}{2}}E_U\left(g(W_{\phi})^L\right).
\end{align*}
\end{proof}

\begin{example}
By considering the NIG case and applying Proposition \ref{espcondicionalIG} with $(K,L)=(1,0)$, $(K,L)=(-1,0)$ and $(K,L)=(0,1)$, we obtain 
\vspace{0.1cm}\\
$\alpha \equiv E(W_{\phi}| Y = y) = \dfrac{\mathcal{K}_{0}\left(\sqrt{\left(\frac{\mu^2}{\sigma^2} + \phi\right)\left(\frac{y^2}{\sigma^2} + \phi\right)}\right)}{\mathcal{K}_{-1}\left(\sqrt{\left(\frac{\mu^2}{\sigma^2} + \phi\right)\left(\frac{y^2}{\sigma^2} + \phi\right)}\right)}\left(\dfrac{y^2 + \phi\sigma^2}{\mu^2 + \phi\sigma^2}\right)^{\frac{1}{2}},$\smallskip \\[0.3cm]
$\gamma \equiv E\left({W_{\phi}}^{-1}| Y = y\right) = \dfrac{\mathcal{K}_{-2}\left(\sqrt{\left(\frac{\mu^2}{\sigma^2} + \phi\right)\left(\frac{y^2}{\sigma^2} + \phi\right)}\right)}{\mathcal{K}_{-1}\left(\sqrt{\left(\frac{\mu^2}{\sigma^2} + \phi\right)\left(\frac{y^2}{\sigma^2} + \phi\right)}\right)}\left(\dfrac{y^2 + \phi\sigma^2}{\mu^2 + \phi\sigma^2}\right)^{-\frac{1}{2}}\; \textrm{and}$\smallskip \\[0.3cm]
$\delta \equiv E\left(-\dfrac{1}{2W_{\phi}}| Y = y\right) = -\dfrac{1}{2}\gamma.$
\end{example}

{\flushleft \textbf{M-step.}} This step of the EM-algorithm consists in maximizing the function $Q\equiv Q(\Psi; \Psi^{(r)})$. The score function associated to this function is
\begin{eqnarray*}
 \frac{\partial Q}{\partial \mu} &=&\frac{1}{\sigma^2}\sum \limits_{i = 1}^n \left\{y_i - \mu\alpha_i^{(r)}\right\},\\
 \frac{\partial Q}{\partial \sigma^2}  &=& -\frac{n}{2\sigma^2} + \frac{1}{2(\sigma^2)^2}\sum \limits_{i = 1}^n \left\{ y_i^2\gamma_i^{(r)} -2\mu y_i + \mu^2\alpha_i^{(r)}\right\},\\
 \frac{\partial Q}{\partial \phi}  &=& n(d'(\phi) - b(\xi_0)) + \sum \limits_{i = 1}^n \left\{\xi_0\alpha_i^{(r)} + \delta_i^{(r)}\right\}.
\end{eqnarray*}

The estimate of $\Psi$ in the $(r+1)$th loop of the EM-algorithm is obtained as the solution of the system of equations $\partial Q(\Psi; \Psi^{(r)})/\partial \Psi = 0$. After some algebra, we get
\begin{eqnarray*}
    \mu^{(r+1)} &=&\frac{\sum \limits_{i = 1}^n y_i}{\sum \limits_{i = 1}^n \alpha_i^{(r)}}, \quad
    {\sigma^2}^{(r+1)}=\frac{1}{n}\sum \limits_{i = 1}^n \left(y_i^2\gamma_i^{(r)} - 2\mu^{(r+1)}y_i + {\mu^{(r+1)}}^2\alpha_i^{(r)}\right) \quad \mbox{and}\\
    \phi^{(r + 1)}&=&v\left( b(\xi_0) - \frac{\xi_0}{n}\sum \limits_{i = 1}^n\alpha_i^{(r)} - \frac{1}{n}\sum \limits_{i = 1}^n\delta_i^{(r)}\right),
\end{eqnarray*}
where $v(\cdot)$ is the inverse function of $d'(\cdot)$. 

We now describe brielfy how the EM-algorithm works. As initial guess for $\Psi^{(0)}$ we can take the MM estimates. Update the conditional expectations with the previous EM-estimates, denoted by $\Psi^{(r)}$, as well as the $Q$-function. Next step is to find the maximum global point of the $Q$-function, say $\Psi^{(r+1)}$, which is provided in closed form above. Check if some convergence criterion is satisfied, for instance $||\Psi^{(r+1)}-\Psi^{(r)}||/||\Psi^{(r)}||<\epsilon$, for some small $\epsilon>0$. If this criterion is satisfied, the current EM-estimate is returned. Otherwise, update the previous EM-estimate by the current one and perform the above algorithm again until convergence is achieved.

The standard error of the parameter estimates can be obtained through the observed information matrix in \cite{l}, which is given by
\begin{eqnarray}\label{infmatrix}
I(\Psi) = E\left(- \frac{\partial l_c(\Psi)^2}{\partial \Psi \partial \Psi^T}{\big|}Y\right) - E\left(\frac{\partial l_c(\Psi)}{\partial \Psi}\frac{\partial l_c(\Psi)^T}{\partial \Psi}\Big|Y\right).
\end{eqnarray}
The elements of this information matrix for the NEF laws are provided in the Appendix.

\section{Simulation}\label{simulation}

In this section we present a small Monte Carlo study for comparing the performance of the EM-algorithm and the method of moments for estimating the parameters of the NEF laws. We also check the estimation of the standard errors obtained from the observed information matrix via EM-algorithm. 

We consider the cases where data are generated from the normal-gamma and NIG distributions. To generate from these distributions, we use the stochastic representation $Y \stackrel{d}{=} \mu W_{\phi} + \sigma \sqrt{W_{\phi}}Z$, where $Z\sim N(0,1)$ independent of $W_{\phi}$, which is $\mbox{Gamma}(\phi)$ or $\mbox{IG}(\phi)$ distributed, respectively. We set the true parameter vector $\Psi = (\mu, \sigma^2, \phi) = (3, 4, 2)$ and sample sizes $n = 30,50,100,150,200,500,1000$. We run a Monte Carlo simulation with $5000$ replicas. Further, we use the MM estimates as initial guesses for the EM-algorithm and consider its convergence criterion to be the one proposed in Subsection \ref{secaoEM} with $\epsilon=10^{-4}$.

Figures \ref{boxplots_parNG} and \ref{boxplots_parNIG} present boxplots of the estimates of the parameters based on the EM-algorithm and method of moments for some sample sizes under the normal gamma and NIG distributions, respectively. Overall, the bias and variance of the estimates go to 0 as the sample size increases, as expected. Let us now  discuss each case with more details.

Concerning the parameter $\mu$, both methods yield similar results under normal gamma and NIG assumptions. On the other hand, regarding the estimation of the parameters $\sigma^2$ and $\phi$, the EM-algorithm has a superior performance over the method of moments in all cases considered for both normal gamma and NIG distributions. We observe that the method of moments yields a considerable bias, even for sample sizes $n=200,500,1000$, in constrast with EM-approach which produces unbiased estimates even for sample sizes $n=50,100$.

Another problem of the MM estimator is that it can produce estimates out of the parameter space.
Under the normal gamma distribution, the percentages of negative estimates for $\sigma^2$ and/or $\phi $ with sample sizes $n = 30,50,100,150,200,500,1000$ were respectively $6.34\%$, $4.72\%$, $3.18\%$, $2.5\%$, $0.24\%$, $0.18\%$ and $0.24\%$. The respective percentages for the NIG case were respectively $3.12\%$, $1.34\%$, $0.74\%$, $0.66\%$, $0.62\%$, $0.24\%$ and $0.04\%$. In these cases, the Monte Carlo replicas were discarted and new values were generated. It is worth to mention that some huge outlier estimates were yielded by the method of moments (for small sample sizes). They cannot be seen from the plots due to the scale of the boxplots, which were chosen to give a clear view of the big picture. 

We finish this section by presenting the estimation of the standard errors of the EM-estimates based on the information matrix given in (\ref{infmatrix}). Tables \ref{est_stand_NG} and \ref{est_stand_NIG} show the standard error of the estimates of the parameters (empirical) and the mean of the standard errors obtained from the information matrix (theoretical) for normal gamma and NIG cases, respectively, for some sample sizes. From these tables, we observe a good agreement between the empirical and the estimated theoretical standard errors, mainly for sample sizes $n\geq100$, for both normal gamma and NIG distributions.

\begin{figure}[H]
\centering
\includegraphics[width=\linewidth, height = 0.30\textheight, keepaspectratio]{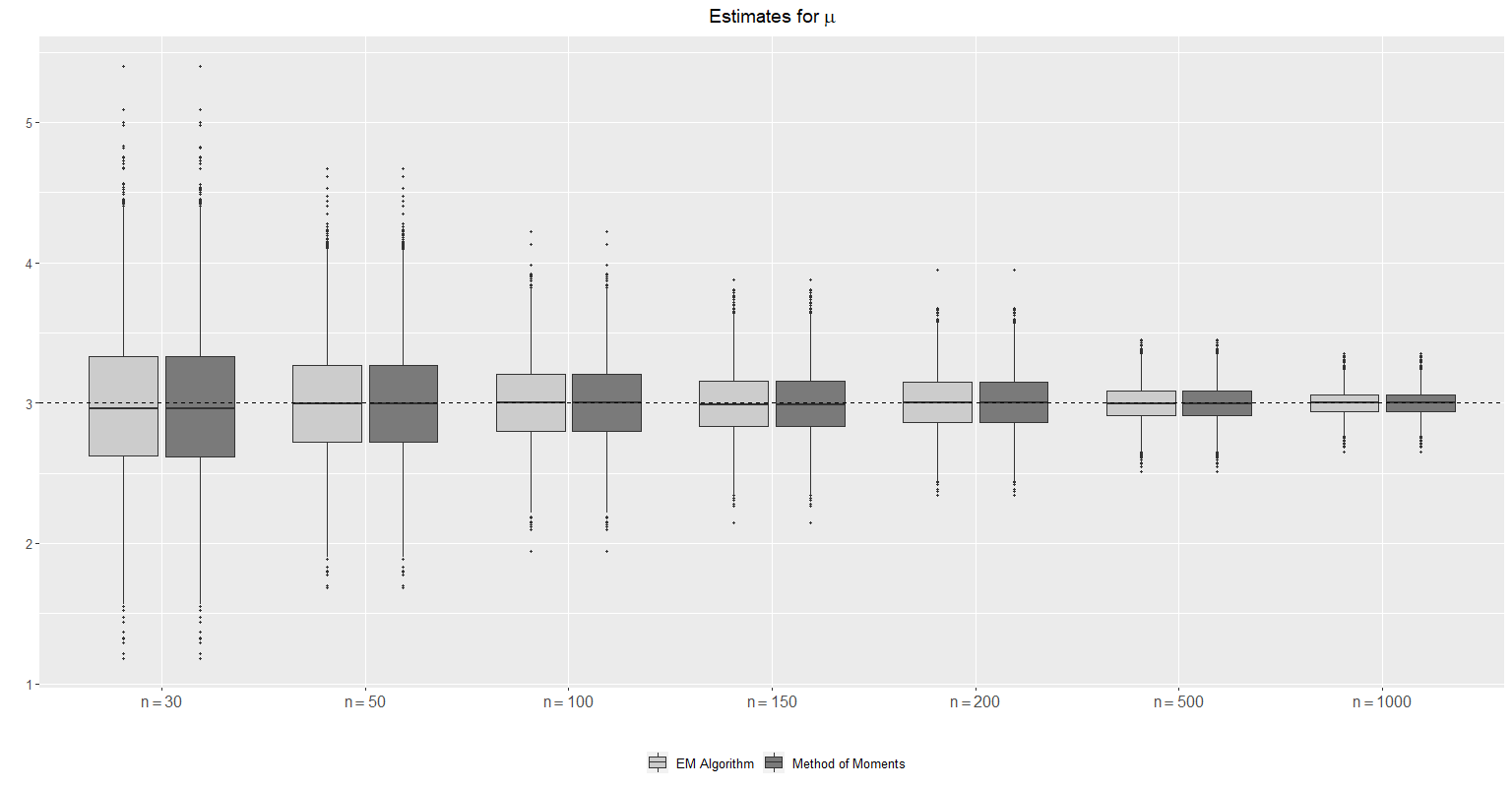}
\includegraphics[width=\linewidth, height = 0.30\textheight, keepaspectratio]{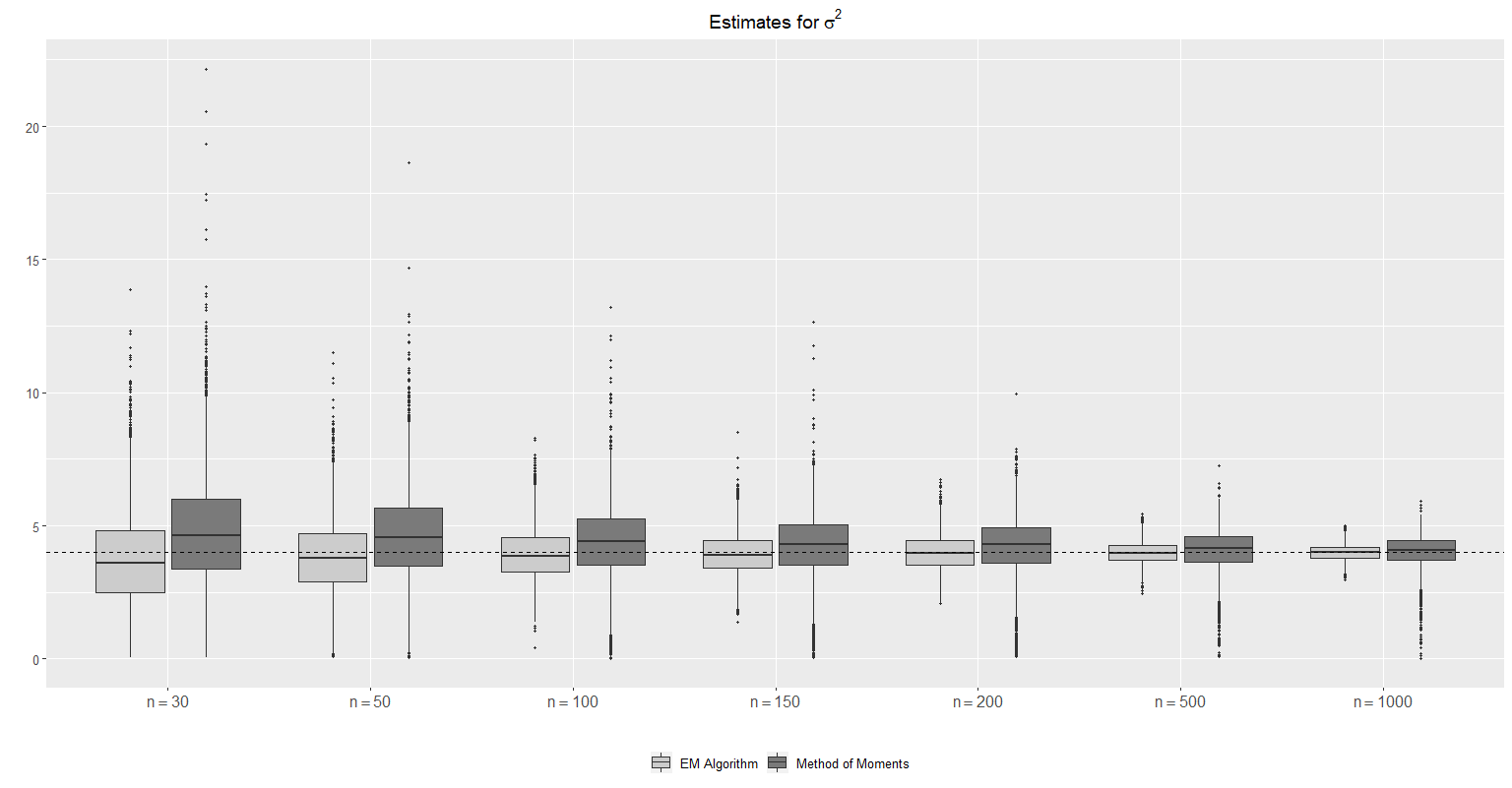}
\includegraphics[width=\linewidth, height = 0.30\textheight, keepaspectratio]{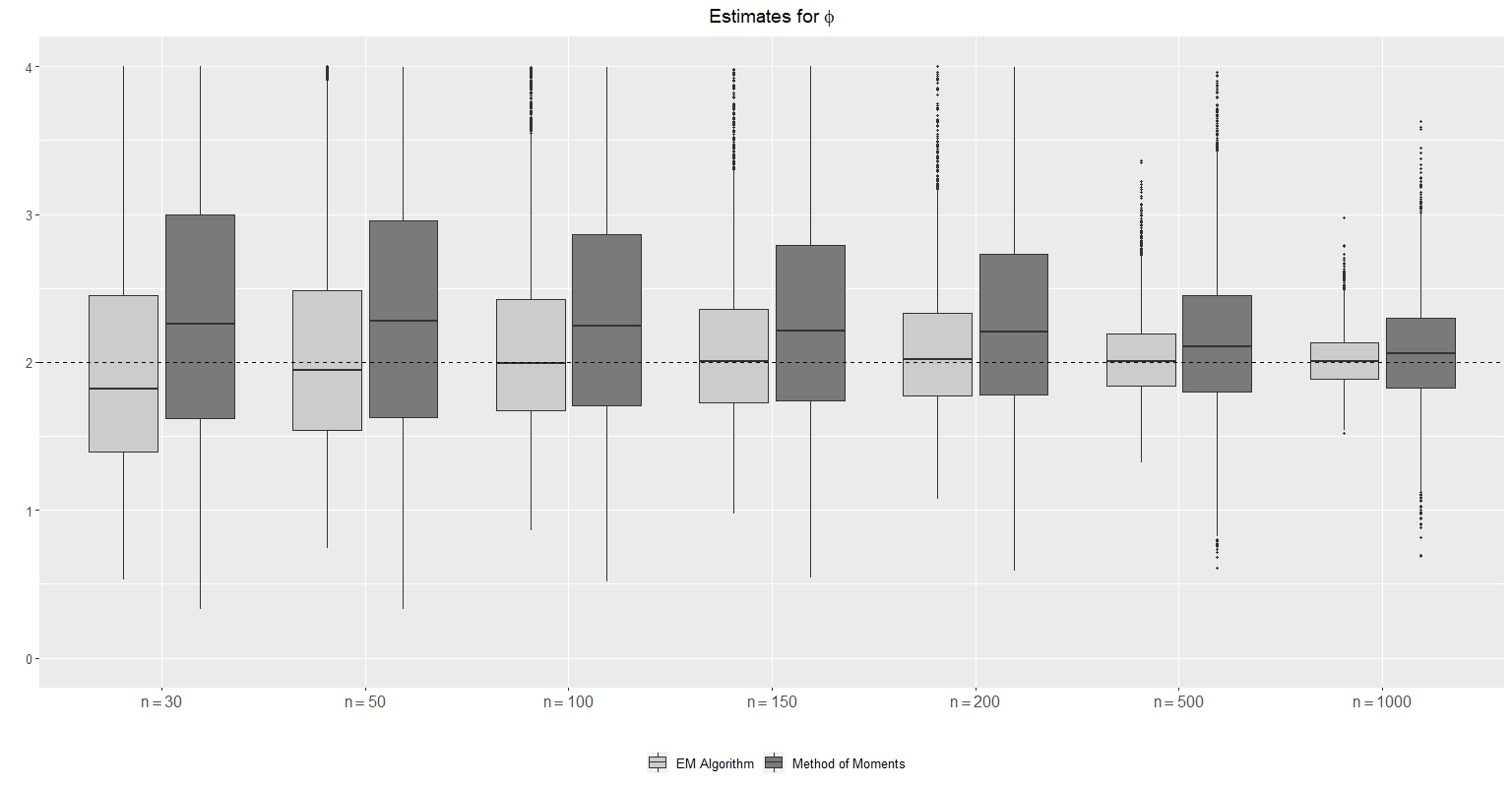}
\caption{Boxplots with the estimates of $\mu$, $\sigma^2$ and $\phi$ obtained based on the EM-algorithm and method of moments under normal gamma distribution. Dotted horizontal lines indicate the true value of the parameter.}
\label{boxplots_parNG}
\end{figure}

\begin{figure}[H]
\centering
\includegraphics[width=\linewidth, height = 0.30\textheight, keepaspectratio]{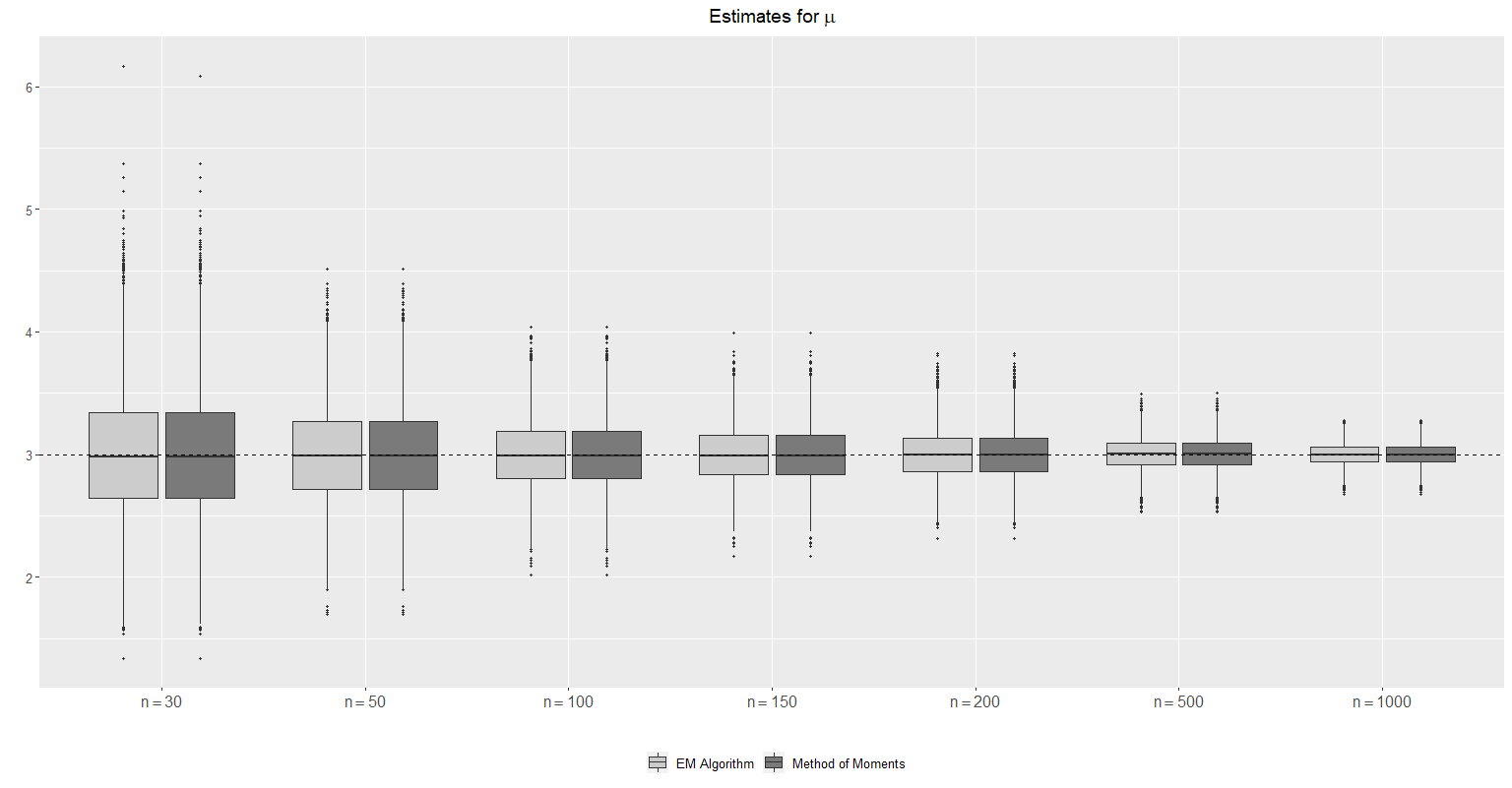}
\includegraphics[width=\linewidth, height = 0.30\textheight, keepaspectratio]{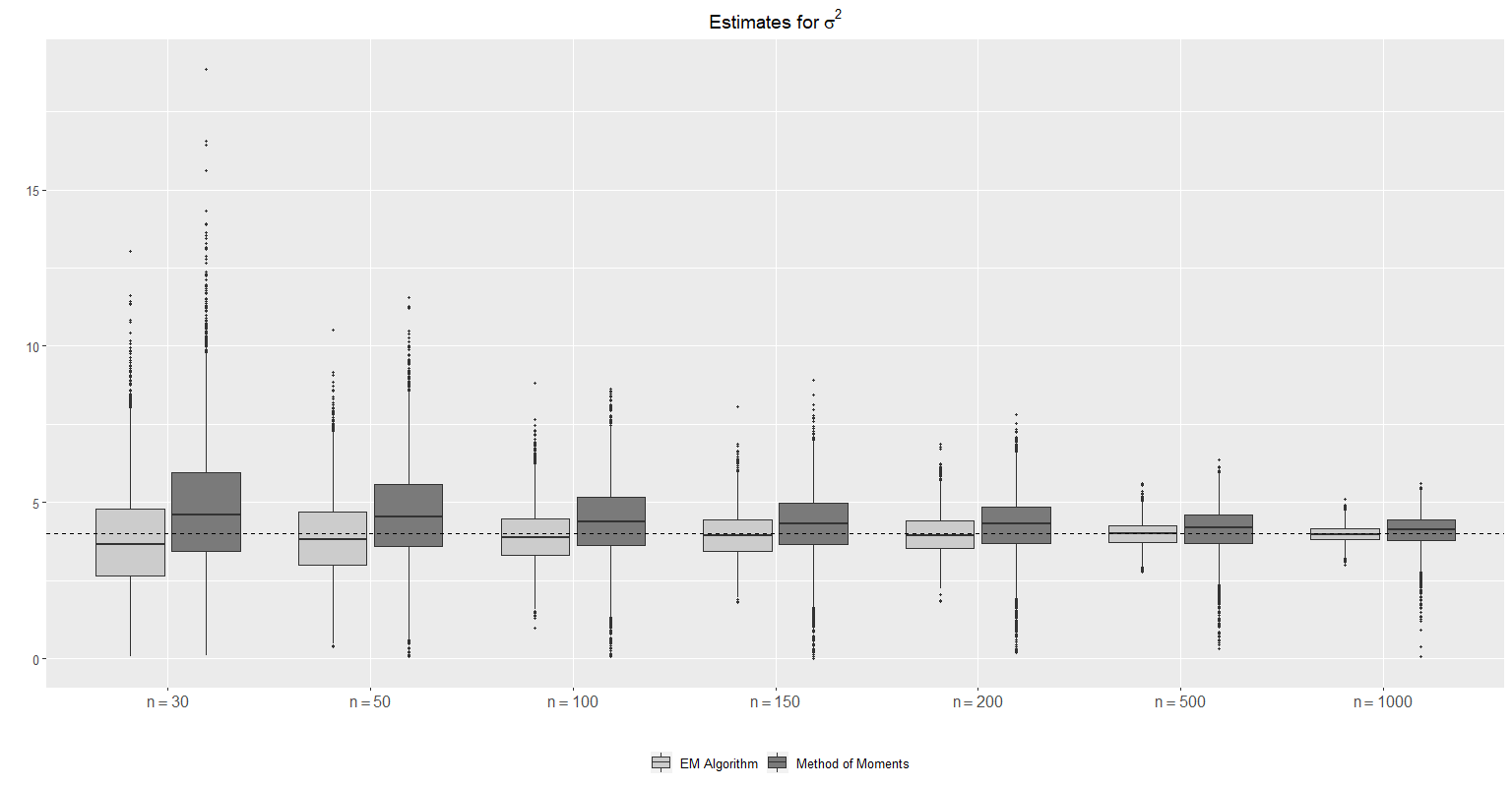}
\includegraphics[width=\linewidth, height = 0.30\textheight, keepaspectratio]{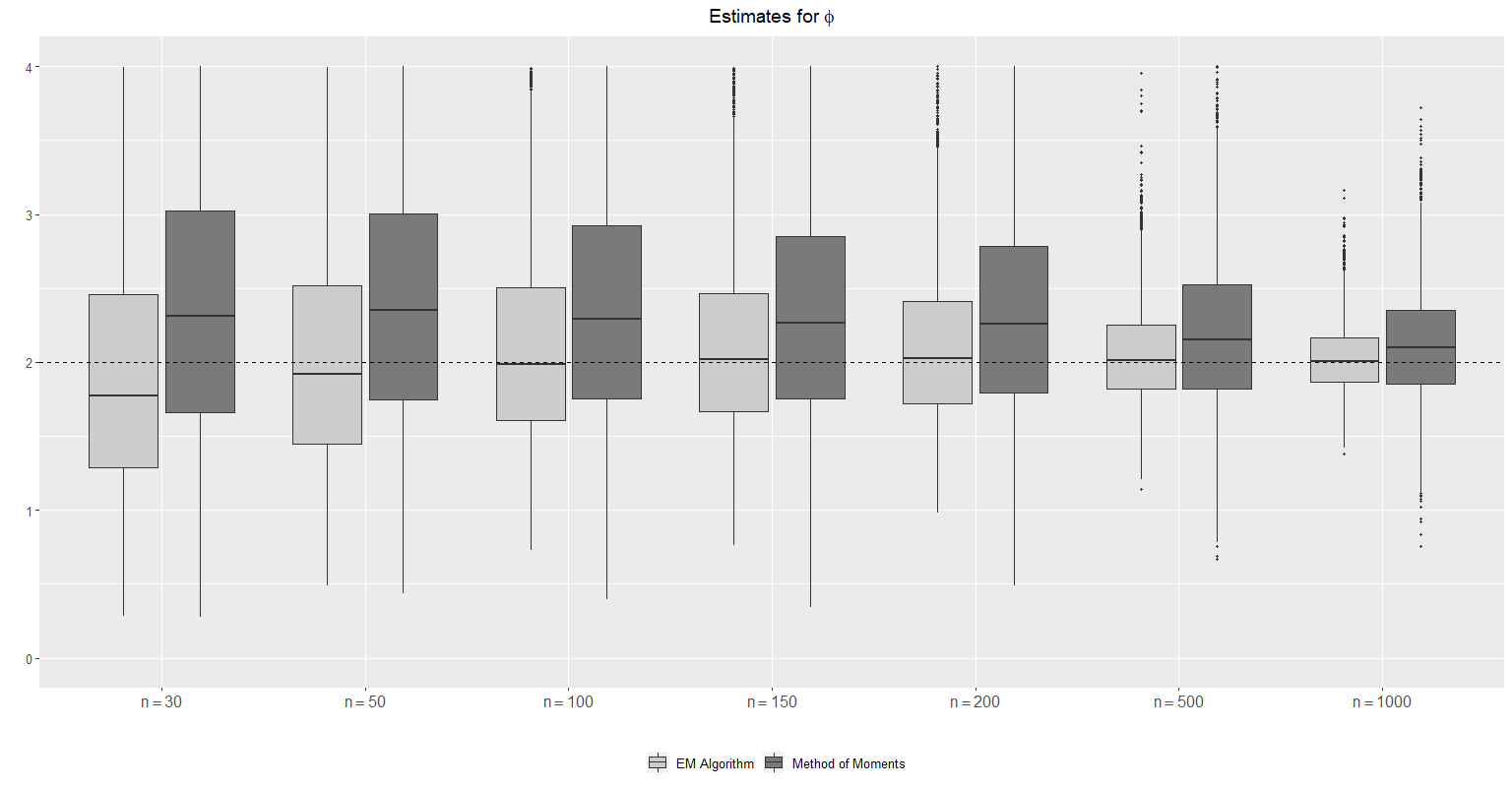}
\caption{Boxplots with the estimates of $\mu$, $\sigma^2$ and $\phi$ obtained based on the EM-algorithm and method of moments under normal inverse-Gaussian distribution. Dotted horizontal lines indicate the true value of the parameter.}
\label{boxplots_parNIG}
\end{figure}

\begin{table}[ht]\footnotesize
\caption{Empirical and theoretical standard errors of parameter estimates under normal gamma assumption.}
\label{est_stand_NG}
\centering
\begin{tabular} {ccccc}
\toprule
& & $\mu$ & $\sigma^2$ & $\phi$ \\\midrule
$n = 30$ & Empirical &   0.5251 & 1.7598 & 6.3974\\
 & Theoretical &  0.5227 & 1.6458 & 4.9593   \\ \midrule
 $n = 50$ & Empirical & 0.4144 & 1.3670 & 3.6329  \\
 & Theoretical & 0.4073 & 1.2927 & 2.1497   \\ \midrule
 $n = 100$& Empirical &   0.2937 & 0.9669 & 0.8815 \\
 & Theoretical & 0.2899 & 0.9242 & 0.7585    \\ \midrule
$n = 150$& Empirical & 0.2376 & 0.7768 & 0.6010 \\
& Theoretical & 0.2368 & 0.7533 & 0.5414 \\\midrule
$n = 200$& Empirical & 0.2096 & 0.6723 & 0.4754 \\
& Theoretical & 0.2056 & 0.6558 & 0.4529 \\\midrule
$n = 500$ & Empirical & 0.1313 & 0.4195 & 0.2727 \\
& Theoretical & 0.1302 & 0.4159 & 0.2658 \\\midrule
$n = 1000$& Empirical & 0.0910 & 0.2957 & 0.1851 \\
& Theoretical & 0.0922 & 0.2948 & 0.1846 \\\bottomrule
\end{tabular}
\end{table}

\begin{table}[H]\footnotesize
\caption{Empirical and theoretical standard errors of parameter estimates under normal inverse-Gaussian assumption.}
\label{est_stand_NIG}
\centering
\begin{tabular}{ccccc}
\toprule
& & $ \mu $ & $ \sigma ^ 2 $ & $ \phi $ \\\midrule
  $n = 30$ & Empirical & 0.5349 & 1.6942 & 21.5352   \\
 & Theoretical &  0.5287 & 1.6094 & 21.2078 \\ \midrule
 $n = 50$& Empirical &  0.4067 & 1.3139 & 11.3969  \\
 & Theoretical & 0.4099 & 1.2545 & 6.1186  \\ \midrule
$n = 100$ & Empirical & 0.2890 & 0.8872 & 1.2192    \\
 & Theoretical & 0.2901 & 0.8849 & 0.9936   \\ \midrule
$n = 150$& Empirical & 0.2376 & 0.7379 & 0.7910 \\
& Theoretical & 0.2374 & 0.7280 & 0.6852 \\\midrule
$n = 200$& Empirical & 0.2083 & 0.6385 & 0.6106 \\
& Theoretical & 0.2057 & 0.6309 & 0.5596 \\\midrule
$n = 500$& Empirical & 0.1325 & 0.4013 & 0.3391 \\
& Theoretical & 0.1303 & 0.4000 & 0.3266 \\\midrule
$n = 1000$& Empirical & 0.0903 & 0.2765 & 0.2295 \\
& Theoretical & 0.0921 & 0.2827 & 0.2254 \\\bottomrule
\end{tabular}
\end{table}

\section{Real data application}\label{application}

We here apply the NEF laws and the proposed EM-algorithm in a real data to illustrate their usefulness in practical situations. We consider daily log-returns of Petrobras stock from Jan 1st 2010 to Dec 31th 2018, which consists of 2263 observations. These data can be obtained through the website \url{https://finance.yahoo.com/}. Denote $P_t$ being the stock price at time $t$ and $Y_t=\log(P_t/P_{t-1})$ the log-return, for $t=1,\ldots,n$, where $n$ denotes the sample size; in this application, $n=2263$. According to \cite{sc}, if $N_\lambda$ is the number of market transactions in an interval of time (one day, for example), with $\lambda$ denoting the mean number of transactions, each of these transactions have an associated return, here denoted by $X_j$, which are a sequence of {\it i.i.d.} random variables with finite variance. Therefore, under these assumptions, we have that $Y_t=\log(P_t/P_{t-1})=\sum_{j=1}^{N_\lambda}X_j$. The number of daily transactions of the Petrobras stock is high due to its liquidity, in other words, the mean number of transactions $\lambda$ is high. This justifies the modeling of these stocks through a NEF class of distributions due to Theorem \ref{theo_conv}.

\begin{table}[H]
\centering
\caption{Estimates of the parameters with their respective standard errors (in parentheses) for the daily log-return of Petrobras stock prices under normal, normal gamma (NG) and NIG models.}
\label{petrobras results}
\begin{tabular}{c|ccc}
\hline
Model & \multicolumn{3}{c}{Estimates} \\ 
 & $ \mu $ & $ \sigma^ 2 $ & $ \phi $ \\
 \hline
Normal & $-$0.0006 (0.0007) & 0.0010 (0.0001) & $\infty$ \\
NG & $-$0.0006 (0.0006) & 0.0009 (0.0001) & 1.3105 (0.1105) \\
NIG & $-$0.0006 (0.0006) & 0.0010 (0.0001) & 0.8201 (0.1161) \\
\hline
\end{tabular}
\end{table}

Table \ref{petrobras results} shows the maximum likelihood estimates of the parameters based on the normal distribution and the EM-estimates for the normal gamma and NIG models. The standard errors are also provided in this table. All models provide similar estimates for the mean and scale parameters as expected. We emphasize that the parameter $\phi$ controls the departure from the normal distribution. By taking $\phi\rightarrow\infty$, we obtain the normal law as a limiting case of the NEF class. The estimates for this parameter under both normal gamma and NIG models indicate some departure from the normal distribution.

This comment is better supported by Figure \ref{petro histogram}, which provides the histogram of the data with the estimated densities of the normal, normal gamma and NIG laws. We can observe that the normal gamma and NIG densities capture well the peak, in contrast with the normal density.

\begin{figure}[H]
\centering
\includegraphics[width=\linewidth, height = 0.35\textheight, keepaspectratio]{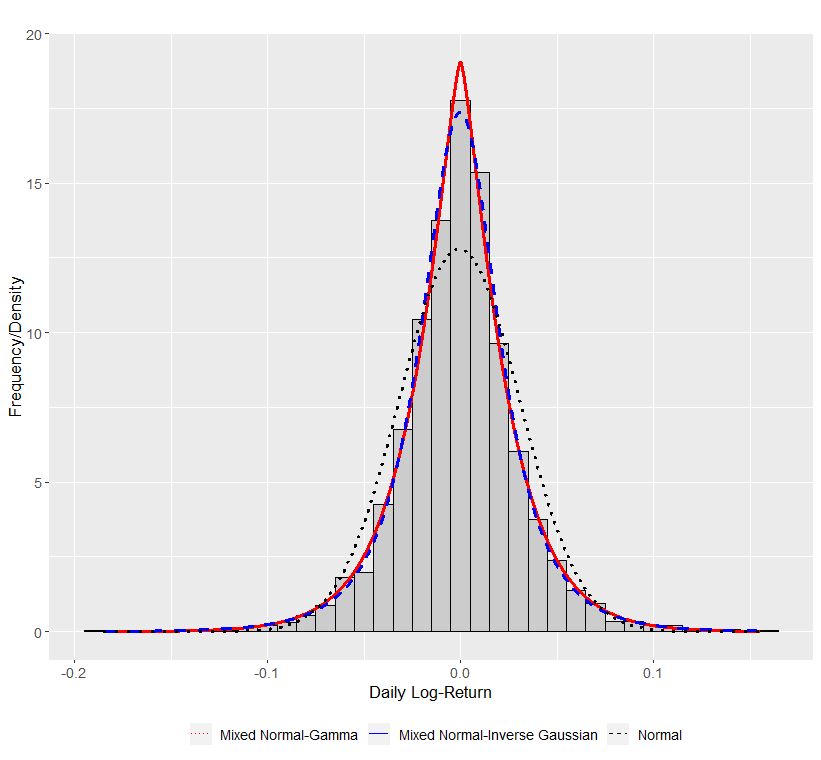}
\caption{Histogram of daily log-returns of Petrobras stock price with fitted normal, NG and NIG densities.}
\label{petro histogram}
\end{figure}

\begin{figure}[H]
\centering
\subfigure[Normal qq-plot]{\includegraphics[width=\textwidth,height=0.2\textheight,keepaspectratio]{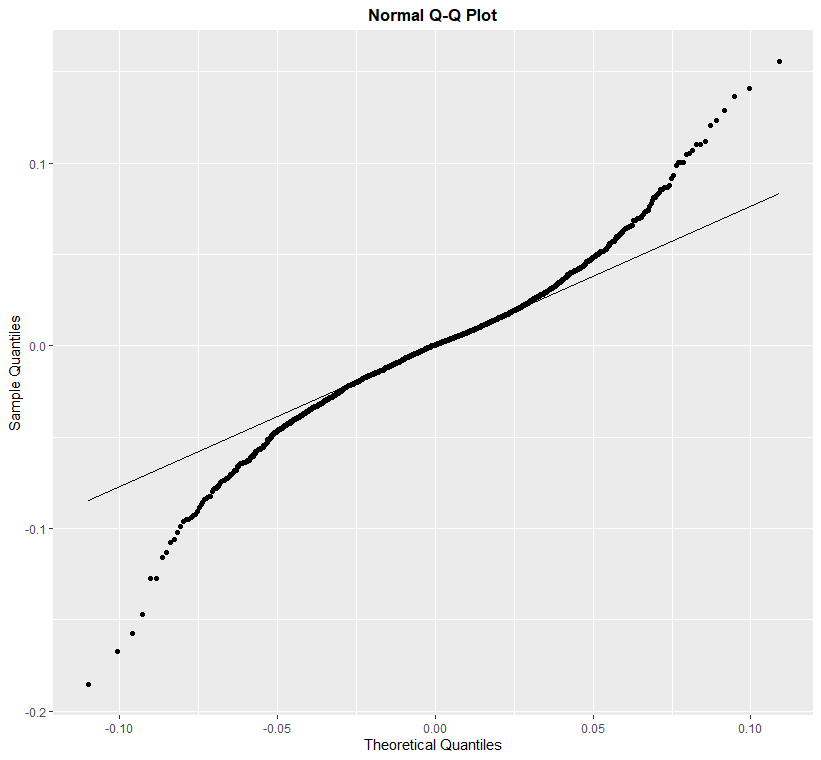}}
\subfigure[NG qq-plot]{\includegraphics[width=\textwidth,height=0.2\textheight,keepaspectratio]{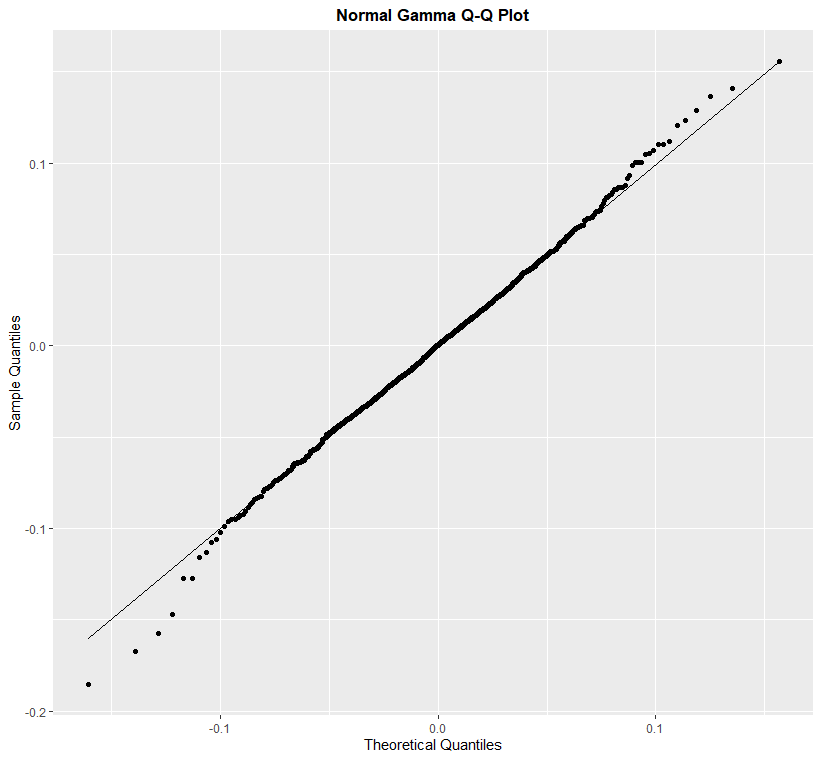}}
\subfigure[NIG qq-plot]{\includegraphics[width=\textwidth,height=0.2\textheight,keepaspectratio]{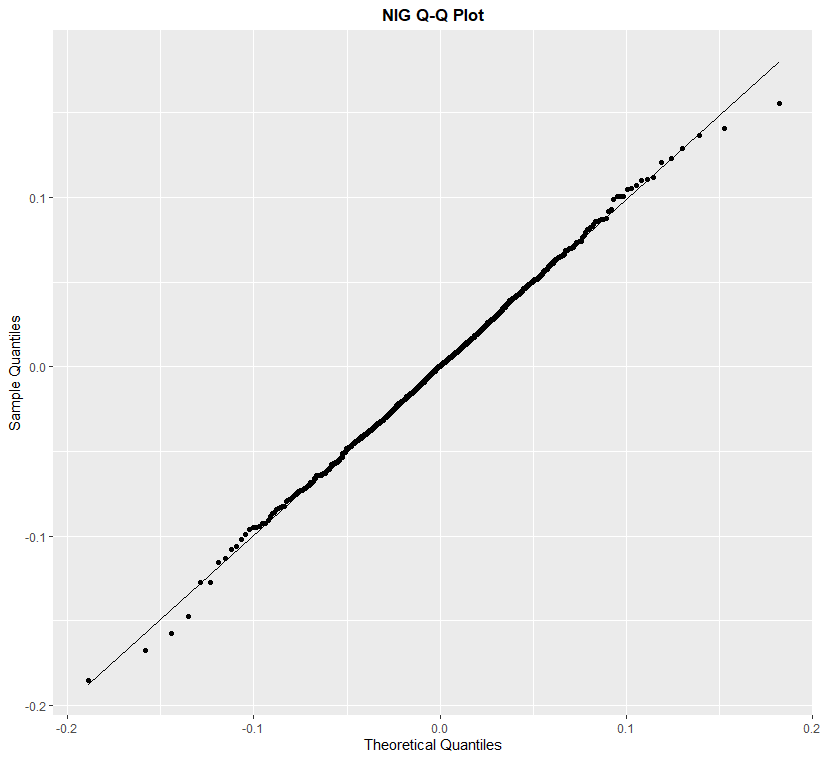}}
\caption{QQ plots of the daily log-returns of Petrobras stock for the fitted normal, normal gamma and NIG laws.}
\label{qqplotPetro}
\end{figure}

To check goodness-of-fit of the models, we consider qq-plots, which consist in plotting the empirical quantiles against the fitted ones. A well-fitted model provides a qq-plot looking like a linear function $y=x$. Figure \ref{qqplotPetro} exhibits the qq-plots based on the normal, NG and NIG fitted models. From this figure, we clearly observe that the normal distribution is not suitable for modeling the log-returns, which was already expected. We also notice that both NG and NIG models provide satisfactory fits, being the last one capturing better the tails. We stress that the modeling of tails is an important task in the study of financial data. For this particular dataset presented in this section, we recommend the use of the NIG law.

\section*{Appendix}

Define the conditional expectations $\alpha_i = E\left({W_{\phi}}_i|Y_i\right)$, $\gamma_i = E\left({{W_{\phi}}_i}^{-1}\big|Y_i\right)$, $\delta_i = E\left(g({W_{\phi}}_i)\big|Y_i\right)$,  $\lambda_i = E\left({W_{\phi}}_i^2\big|Y_i\right)$, $\tau_i = E\left({W_{\phi}}_ig({W_{\phi}}_i)\big|Y_i\right)$, $\nu_i = E\left(g({W_{\phi}}_i)^2\big|Y_i\right)$, $\rho_i = E\left({{W_{\phi}}_i}^{-2}\big|Y_i\right)$ and $\varphi_i = E\left({{W_{\phi}}_i}^{-1}g({W_{\phi}}_i)\big|Y_i\right)$, for $i=1,\ldots,n$.

The elements of the observed information matrix are given by the following expressions:

\begin{eqnarray*}
E\left(-\dfrac{\partial^2l_c}{\partial \mu^2}\Big|Y\right) = \dfrac{1}{\sigma^2} \displaystyle \sum\limits_{i = 1}^{n}\alpha_i, \quad E\left(-\dfrac{\partial^2l_c}{\partial (\sigma^2)^2 }\Big|Y\right) = \dfrac{1}{(\sigma^2)^3} \displaystyle \sum\limits_{i = 1}^{n}\Big\{y_i^2\gamma_i - 2 \mu y_i + \mu^2 \alpha_i\Big\} - \dfrac{n}{2(\sigma^2)^2},
\end{eqnarray*}
\begin{eqnarray*}
E\left(-\dfrac{\partial^2l_c}{\partial \phi^2 }\Big|Y\right) = -n d''(\phi), \quad 
E\left(-\dfrac{\partial^2l_c}{\partial \mu \partial \sigma^2 }\Big|Y\right) =  \dfrac{1}{(\sigma^2)^2} \displaystyle \sum\limits_{i = 1}^{n}\{y_i - \mu\alpha_i\},\quad E\left(-\dfrac{\partial^2l_c}{\partial \mu \partial \phi }\Big|Y\right)=0,
\end{eqnarray*} 
\begin{eqnarray*}
&&E\left(\left(\dfrac{\partial l_c}{\partial \mu}\right)^2 \Big|Y \right) = \dfrac{1}{(\sigma^2)^2}\left[\left(\sum\limits_{i = 1}^{n}y_i\right)^2 - 2\mu\sum\limits_{i = 1}^{n}y_i\sum\limits_{i = 1}^{n}\alpha_i + \mu^2\left(\sum\limits_{i = 1}^{n}\lambda_i + \sum\limits_{i \neq j}\alpha_i\alpha_j\right)\right],
\end{eqnarray*} 

\begin{eqnarray*} 
&&E\left(\left(\dfrac{\partial l_c}{\partial \sigma^2 }\right)^2 \Big|Y \right)=\frac{\mu^4}{4(\sigma^2)^4}\left(\sum\limits_{i = 1}^{n}\lambda_i + \sum\limits_{i \neq j}\alpha_i\alpha_j\right)+\left(-\frac{n}{2\sigma^2} - \frac{\mu}{(\sigma^2)^2}\sum\limits_{i = 1}^{n}y_i\right)\\
&&\times\left(-\frac{n}{2\sigma^2} + \frac{1}{(\sigma^2)^2}\sum\limits_{i = 1}^{n}\Big\{y_i^2\gamma_i - \mu y_i + \mu^2\alpha_i\Big\}\right)+ 
\frac{1}{4(\sigma^2)^4}\left(\sum\limits_{i = 1}^{n}y_i^4\rho_i + \sum\limits_{i \neq j}y_i^2\gamma_i\; y_j^2 \gamma_j\right)\\
&&+\frac{\mu^2}{2(\sigma^2)^4}\left(\sum\limits_{i = 1}^{n}y_i^2 + \sum\limits_{i \neq j}\alpha_iy_j^2\gamma_j\right),\quad E\left(-\dfrac{\partial^2l_c}{\partial \sigma^2 \partial \phi }\Big|Y\right)=0,
\end{eqnarray*} 

\begin{eqnarray*}
&&E\left(\left(\dfrac{\partial l_c}{\partial \phi}\right)^2 \Big|Y \right)=n\left(-b(\xi_0) + d'(\phi)\right)\left(-nb({\xi}_0) + nd'(\phi) + 2\sum\limits_{i = 1}^n\Big\{\delta_i + \xi_0\alpha_i\Big\}\right)+\sum\limits_{i = 1}^n\nu_i +\\
&&\sum\limits_{i \neq j}\delta_i\delta_j +2\xi_0\left(\sum\limits_{i = 1}^n\tau_i + \sum\limits_{i \neq j}\alpha_i\delta_j\right) + \xi_0^2\left(\sum\limits_{i = 1}^n\lambda_i + \sum\limits_{i \neq j}\alpha_i\alpha_j\right),
\end{eqnarray*} 
 
\begin{eqnarray*}    
&&E\left(\dfrac{\partial l_c}{\partial \mu} \dfrac{\partial l_c}{\partial \sigma^2} \Big|Y \right) = \dfrac{1}{\sigma^2}\sum\limits_{i = 1}^{n}y_i\left(-\frac{n}{2\sigma^2} + \frac{1}{2(\sigma^2)^2}\sum\limits_{j = 1}^{n}\left\{y_j^2\gamma_j - 2\mu y_j + \mu^2\alpha_j\right\}\right)\\
&&+ \frac{\mu}{\sigma^2}\sum\limits_{i = 1}^{n}\alpha_i\left(\frac{n}{2\sigma^2}+ \frac{\mu}{(\sigma^2)^2}\sum\limits_{j = 1}^{n}y_j\right)-\frac{\mu}{2(\sigma^2)^3}\left(\sum\limits_{i = 1}^{n}y_i^2 + \sum\limits_{i \neq j}\alpha_iy_j^2\gamma_j\right) - \frac{\mu^3}{2(\sigma^2)^3}\bigg(\sum\limits_{i = 1}^{n}\lambda_i +\\
&&\sum\limits_{i \neq j}\alpha_i\alpha_j\bigg),\quad E\left(\dfrac{\partial l_c}{\partial \mu} \dfrac{\partial l_c}{\partial \phi} \Big|Y \right)=\dfrac{n}{\sigma^2}\sum\limits_{i = 1}^{n}\left\{y_i - \mu\alpha_i\right\}(-b(\xi_0) + d'(\phi)) +\\ &&\dfrac{1}{\sigma^2}\sum\limits_{i = 1}^{n}y_i\sum\limits_{j = 1}^{n}\Big\{\delta_j + \xi_0\alpha_j\Big\}-\frac{\mu}{\sigma^2}\left(\sum\limits_{i = 1}^{n}\tau_i + \sum\limits_{i \neq j}\alpha_i\delta_j\right) - \frac{\mu\xi_0}{\sigma^2}\left(\sum\limits_{i = 1}^{n}\lambda_i + \sum\limits_{i \neq j}\alpha_i\alpha_j\right)
\end{eqnarray*}     
and
\begin{eqnarray*}
&& E\left(\dfrac{\partial l_c}{\partial \sigma^2} \dfrac{\partial l_c}{\partial \phi} \Big|Y \right) =\left(-\frac{n}{2\sigma^2} - \frac{\mu}{(\sigma^2)^2}\sum\limits_{i = 1}^n y_i\right)\left(-n b(\xi_0) + n d'(\phi) + \sum\limits_{i = 1}^n\Big\{\delta_i + \xi_0\alpha_i\Big\}\right) +\\
&& \frac{n}{2(\sigma^2)^2}\sum\limits_{i = 1}^n\Big\{y_i^2\gamma_i + \mu^2\alpha_i\Big\}\left(- b(\xi_0) +  d'(\phi)\right) + \frac{1}{2(\sigma^2)^2}\left(\sum\limits_{i = 1}^n y_i^2\varphi_i + \sum\limits_{i \neq j}y_i^2\gamma_i\delta_j\right) +\\
&& \frac{\xi_0}{2(\sigma^2)^2}\left(\sum\limits_{i = 1}^ny_i^2 + \sum\limits_{i \neq j}y_i^2\gamma_i\alpha_j\right) + \frac{\mu^2}{2(\sigma^2)^2}\left(\sum\limits_{i = 1}^n \tau_i+ \sum\limits_{i \neq j}\alpha_i\delta_j\right) + \frac{\mu^2\xi_0}{2(\sigma^2)^2}\left(\sum\limits_{i = 1}^n \lambda_i+ \sum\limits_{i \neq j}\alpha_i\alpha_j\right).
\end{eqnarray*}

We now provide explicit expressions for the conditional expectations involved in the information matrix for the normal gamma and NIG cases. We omit the index $j$ to simplify the notation.

\begin{example} Let $Y$ be a normal gamma distribution with parameters $\mu \in \mathbb{R}$, $\sigma^2 > 0$, $\phi > 0$ with associated latent factor $W_{\phi} \sim \textrm{Gamma}(\phi)$. Define $a = \frac{\mu^2}{\sigma^2} + 2\phi$ and $b = \frac{y^2}{\sigma^2}$. Then, we have that
\begin{eqnarray*}
\lambda = E\left(W_{\phi}^2\big|Y\right) = \dfrac{\mathcal{K}_{\phi + \frac{3}{2}}(\sqrt{ab})}{\mathcal{K}_{\phi - \frac{1}{2}}(\sqrt{ab})} \dfrac{b}{a},\quad \rho = E\left({W_{\phi}}^{-2}\big|Y\right) = \dfrac{\mathcal{K}_{\phi - \frac{5}{2}}(\sqrt{ab})}{\mathcal{K}_{\phi - \frac{1}{2}}(\sqrt{ab})}\dfrac{a}{b},
\end{eqnarray*}
\begin{eqnarray*}
\tau = E\left(W_{\phi}\log W_{\phi}\big|Y\right) = \dfrac{\mathcal{K}_{\phi + \frac{1}{2}}(\sqrt{ab})}{\mathcal{K}_{\phi - \frac{1}{2}}(\sqrt{ab})} \left(\dfrac{b}{a}\right)^{\frac{1}{2}}\left(\dfrac{1}{2}\log\left(\dfrac{b}{a}\right) + \dfrac{\partial}{\partial \phi}\log\mathcal{K}_{\phi + \frac{1}{2}}(\sqrt{ab})\right),
\end{eqnarray*}
\begin{eqnarray*}
\nu = E\left((\log W_{\phi})^2\big|Y\right) = E_U\left((\log U)^2\right), \; \mathrm{with} \; U \sim GIG\left(a, b, \phi - \frac{1}{2}\right)   
\end{eqnarray*}
and
\begin{eqnarray*}
\varphi = E\left({W_{\phi}}^{-1}\log W_{\phi}\big|Y\right) = \dfrac{\mathcal{K}_{\phi - \frac{3}{2}}(\sqrt{ab})}{\mathcal{K}_{\phi - \frac{1}{2}}(\sqrt{ab})}\left(\dfrac{a}{b}\right)^{\frac{1}{2}}\left(\dfrac{1}{2}\log\left(\dfrac{b}{a}\right) + \dfrac{\partial}{\partial \phi}\log\mathcal{K}_{\phi - \frac{3}{2}}(\sqrt{ab})\right).
\end{eqnarray*}
\end{example}

\begin{example} For the NIG case, the conditional expectations assume the forms
\begin{eqnarray*}
\lambda = E\left(W_{\phi}^2\Big|Y\right) = \dfrac{b}{a}, \quad \rho = E\left({W_{\phi}}^{-2}\Big|Y\right) = \dfrac{a}{b}\dfrac{\mathcal{K}_{-3}(\sqrt{ab})}{\mathcal{K}_{-1}(\sqrt{ab})},  
\end{eqnarray*}
\begin{eqnarray*}
\tau = E\left(W_{\phi}\left(-\dfrac{1}{2W_{\phi}}\right)\Big|Y\right) = - \dfrac{1}{2},\quad
\nu = E\left(\left(-\dfrac{1}{2W_{\phi}}\right)^2\Big|Y\right) =  \dfrac{a}{4b}\dfrac{\mathcal{K}_{-3}(\sqrt{ab})}{\mathcal{K}_{-1}(\sqrt{ab})}
\end{eqnarray*}     
and 
\begin{eqnarray*}
\varphi = E\left({W_{\phi}}^{-1}\left(-\dfrac{1}{2W_{\phi}}\right)\Big|Y\right) = \dfrac{a}{2b}\dfrac{\mathcal{K}_{-3}(\sqrt{ab})}{\mathcal{K}_{-1}(\sqrt{ab})}.
\end{eqnarray*}     
\end{example}

\end{document}